\newtheorem{theorem}{Theorem}[section]
\newtheorem{lem}[theorem]{Lemma}
\newtheorem{prop}[theorem]{Proposition}
\newtheorem{fact}[theorem]{Fact}
\theoremstyle{definition}
\newtheorem{defi}[theorem]{Definition}
\newtheorem{ex}[theorem]{Example}
\newtheorem{rem}[theorem]{Remark}
\newtheorem{prob}[theorem]{Problem}
\numberwithin{equation}{section}
\numberwithin{theorem}{section}
\newcommand{\LN}{\mathbbm{N}}
\newcommand{\LC}{\mathbbm{C}}
\newcommand{\LQ}{\mathbbm{Q}}
\newcommand{\LZ}{\mathbbm{Z}}
\newcommand{\kk}{\mathbbm{k}}
\newcommand{\zetan}{\zeta_{n}}
\newcommand{\Z}{\mathbbm{Z}}
\newcommand{\N}{\mathbbm{N}}
\newcommand{\Zn}{\mathbbm{Z}[\zeta_{n}]}
\newcommand{\Q}{\mathbbm{Q}}
\newcommand{\R}{\mathbbm{R}}
\newcommand{\C}{\mathbbm{C}}
\newdimen{\standardlabelwidth}
\newcommand{\standardlabel}[1]{#1\kern\standardlabelwidth}
\begin{document}


\title[Uniqueness in discrete tomography of Delone sets]{Uniqueness in discrete tomography of\\ Delone sets with long-range order}
\author{Christian Huck}
\address{Department of Mathematics and Statistics\\
  The Open University\\ Walton Hall\\ Milton Keynes\\ MK7 6AA\\
   United Kingdom}
\email{c.huck@open.ac.uk}
\thanks{The author was supported by the German Research Council (Deutsche Forschungsgemeinschaft), within the CRC 701, and by EPSRC via Grant EP/D058465/1.}

\begin{abstract}
We address the problem of determining finite subsets of Delone sets
$\varLambda\subset\R^d$ with long-range order by $X$-rays in prescribed $\varLambda$-directions, i.e., directions parallel to
non-zero interpoint vectors of $\varLambda$. Here, an $X$-ray in direction
$u$ of a finite
 set gives the number of points in the set on each line
parallel to $u$. For our main result, we introduce the notion of algebraic Delone
sets $\varLambda\subset\R^2$ and derive a sufficient condition for the
determination of the convex subsets of these sets by $X$-rays in four prescribed
$\varLambda$-directions.
\end{abstract}

\maketitle

\section{Introduction}\label{intro}

{\em Discrete tomography} is concerned with the 
inverse problem of retrieving information about some {\em finite}
object from 
(generally noisy) information about its slices. An important problem  is the {\em reconstruction} of a finite point set
from its line sum functions in a small number of directions. More precisely, a ({\em discrete parallel}\/) {\em
  X-ray} of a finite subset of $\R^d$ in direction $u$ gives the
number of points in the set on each line parallel to
$u$. In the
 traditional setting, motivated by crystals, the positions to be determined form a subset of a common translate of a lattice in $\R^d$.  
In fact, many of the problems in discrete tomography
have been studied on the square lattice; see ~\cite{GG}, ~\cite{GG2}, ~\cite{GGP}, ~\cite{Gr} and~\cite{HK}. In the longer run, by
also having other structures than perfect crystals in mind, one has to
take into account wider classes of sets, or at least
significant deviations from the lattice structure. As an intermediate
step between periodic and random (or amorphous) \emph{Delone sets}, we consider Delone sets with {\em long-range order}, thus including systems of {\em aperiodic order} like \emph{model sets} (also called \emph{mathematical quasicrystals}) as a special case.

The main motivation for our interest in the discrete tomography of
Delone sets $\varLambda$ with long-range order comes from the fact that these sets serve as a rather general model of atomic positions in solid state materials together with the demand of materials
science to reconstruct such three-dimensional structures or planar
layers of them from their images under quantitative {\em high
  resolution transmission electron microscopy} (HRTEM). In fact, in~\cite{ks} and~\cite{sk} a technique
is described, which, for some
crystals, can effectively measure the number of atoms lying on densely occupied 
lines. Therefore, only $\varLambda$-directions, i.e.,
directions parallel to non-zero interpoint vectors of $\varLambda$,
will be considered. Further, since typical objects may be damaged or even destroyed by the radiation
energy after about $3$ to $5$ images taken by
HRTEM, one is restricted to a small 
number of $X$-rays. It actually is this restriction to {\em few high-density
directions} that makes the
problems of discrete tomography mathematically challenging, even if
one assumes the absence of noise. 

Since the above reconstruction problem of discrete tomography can possess rather
different solutions, one is led to the {\em
  determination} of finite subsets of a fixed Delone set
$\varLambda$ by $X$-rays in a
 small number of suitably prescribed $\varLambda$-directions. More
 precisely, we say that the elements of a collection $\mathcal{E}$ of
 finite sets are {\em determined} by the
$X$-rays in a finite set of directions if
different sets $F$ and $F'$ in $\mathcal{E}$ cannot have the same
$X$-rays in these directions. 

After summarizing a few general 
results on determination, we introduce the class
of {\em algebraic Delone sets}  
$\varLambda\subset\R^2$ (see Definition~\ref{algdeldef}) and study the determination of the {\em convex
  subsets} of these sets by $X$-rays in $\varLambda$-directions. They are finite subsets of $\varLambda$ with the property that their convex hull contains no new points of
$\varLambda$. By using
$p$-adic valuation methods as introduced in discrete tomography by
Gardner and Gritzmann in~\cite{GG} together with standard facts
from the theory of (cyclotomic) fields, we derive a sufficient condition
for the determination of the convex subsets of $\varLambda$ by
$X$-rays in four $\varLambda$-directions and show that three $\varLambda$-directions never
suffice for this purpose; cf.~Theorem~\ref{main3gen}. Further, by using standard
facts from algebraic number theory and the theory of
Pisot-Vijayaraghavan 
numbers, it is shown that \emph{cyclotomic model sets} (see Definition~\ref{deficy}) are examples of algebraic Delone sets; cf.~Proposition~\ref{cmsads}. We conclude
with a discussion of the result on the determination of convex
sets by four $X$-rays for this specific class of objects;
cf.~Theorem~\ref{main3gencoro}.

\section{Preliminaries and Notation}\label{found}

Natural numbers are always assumed to be positive and we denote by $\mathbbm{P}$ the
set of rational primes. We denote the norm in Euclidean $d$-space $\mathbbm{R}^{d}$
by $\Arrowvert \cdot \Arrowvert$. The unit sphere in $\mathbbm{R}^{d}$ is
denoted by $\mathbb{S}^{d-1}$ and its elements are also called
{\em directions}. For $r>0$ and $x\in\R^{d}$,
$B_{r}(x)$ is the open ball of radius $r$ about $x$. For a subset $S\subset
\mathbbm{R}^{d}$, a direction
$u\in\mathbb{S}^{d-1}$ is called an $S${\em-direction} if it is
parallel to a non-zero element of the difference set
$S-S$ of $S$. A {\em homothety} $h\!:\, \mathbbm{R}^{d} \rightarrow
\mathbbm{R}^{d}$ is given by $z \mapsto \lambda z + t$, where
$\lambda \in \R$ is positive and $t\in \mathbbm{R}^{d}$. A {\em convex polygon} is the convex hull of a finite set of points in $\R^2$. For a subset $S \subset \R^2$, a {\em polygon in} $S$ is a convex polygon with all vertices in $S$. Further, a bounded subset $C$ of $S$ is called a {\em convex subset of} $S$ if its
convex hull contains no new points of $S$, i.e., if $C =
\operatorname{conv}(C)\cap S$. Let $U\subset \mathbb{S}^{1}$ be
a finite set of directions. A non-degenerate convex polygon $P$ is
called a {\em $U$-polygon} if it has the property that whenever $v$ is a vertex of $P$ and $u\in U$, the line in the plane in direction $u$ which passes through $v$ also meets another vertex $v'$ of $P$. Occasionally, we 
identify $\C$ with $\R^{2}$ and $\bar{z}$ will always denote the
complex conjugate of $z$. Consider a set $\varLambda\subset\R^d$, where $d\in\N$. $\varLambda$ is called {\em uniformly discrete} if there is a radius
$r>0$ such that every ball $B_{r}(x)$ with $x\in\mathbbm{R}^{d}$ contains at most one point of
  $\varLambda$. Note that the bounded 
  subsets of a uniformly discrete set $\varLambda$ are precisely the
  finite subsets of $\varLambda$. $\varLambda$ is called {\em relatively dense} if there is a radius $R>0$
  such that every ball $B_{R}(x)$ with 
  $x\in\mathbbm{R}^{d}$ contains at least one point of
  $\varLambda$. $\varLambda$ is called a {\em Delone set} if it is both uniformly
  discrete and relatively dense. $\varLambda$ is said to be of \emph{finite local complexity} if $\varLambda-\varLambda$ is discrete and closed. Note that a subset 
  $\varLambda$ has finite local complexity if and
  only if for every $r>0$ there
  are, up to translation, only finitely many \emph{patches of radius
    $r$}, i.e., sets of the form $\varLambda\cap
  B_{r}(x)$, where $x\in\mathbbm{R}^d$; cf.~\cite{Moody}.  A Delone set $\varLambda$ is a \emph{Meyer set} if $\varLambda-\varLambda$ is
  uniformly discrete. Translates $\varLambda$ of arbitrary lattices
  $L\subset\R^d$, are simple examples of Meyer sets, since
  $\varLambda-\varLambda=L$ is a Delone set. Trivially, any Meyer set
  is of finite local complexity. Finally, $\varLambda$ is called {\em aperiodic} if it has no
non-zero translation symmetries.

\begin{defi}\label{xray..}
Let $F$ be a finite subset of $\mathbbm{R}^{d}$. Furthermore, let $u\in
\mathbb{S}^{d-1}$ be a direction and let $\mathcal{L}_{u}^{d}$ be the set
of lines in direction $u$ in $\mathbbm{R}^{d}$. Then the ({\em
  discrete parallel}\/) {\em X-ray} of $F$ {\em in direction} $u$ is
the function $X_{u}F: \mathcal{L}_{u}^{d} \rightarrow
\mathbbm{N}_{0}:=\mathbbm{N} \cup\{0\}$, defined by $$X_{u}F(\ell) :=
\operatorname{card}(F \cap \ell\,) =\sum_{x\in \ell}
\mathbbm{1}_{F}(x)\,.$$ Moreover, the {\em support} $(X_{u}F)^{-1}(\N)$ of $X_{u}F$, i.e.,
the set of lines in $\mathcal{L}_{u}^{d}$ which pass through at least one
point of $F$, is denoted by $\operatorname{supp}(X_{u}F)$. Further, for a finite set
  $U\subset\mathbb{S}^{d-1}$ of directions, set
$$
G^{F}_{U}\,\,:=\,\,\bigcap_{u\in U}\,\,\Big( \bigcup_{\ell \in
  \mathrm{supp}(X_{u}F)} \ell\Big)\,.
$$

\end{defi}

Note that, in the situation of Definition~\ref{xray..}, one has
$F\subset G^{F}_{U}$. Further, $G^{F}_{U}$ is finite if
$\operatorname{card}U\geq 2$.




\begin{fact}\label{homotu}
Let $h\!:\,\mathbbm{R}^{d}  \rightarrow \mathbbm{R}^{d}$ be a homothety, and let $U\subset \mathbb{S}^{d-1}$ be a finite set of directions. Then, one has:
\begin{itemize}
\item[(a)]
If $P$ is a $U$-polygon, then $h(P)$ is again a $U$-polygon.
\item[(b)]
If $F$ and $F'$ are finite subsets of $\mathbbm{R}^d$
with the same $X$-rays in the directions of $U$, then the finite sets $h(F)$ and $h(F')$ also have the same $X$-rays in the directions of $U$.
\end{itemize}
\end{fact}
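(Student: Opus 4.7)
The plan is to base both parts on one structural observation: a homothety $h\colon z \mapsto \lambda z + t$ with $\lambda > 0$ is an affine bijection of $\R^d$ that maps every line $\ell \in \mathcal{L}_u^d$ to a parallel line $h(\ell) \in \mathcal{L}_u^d$, and the induced map $\ell \mapsto h(\ell)$ is a bijection of $\mathcal{L}_u^d$ for each $u \in \mathbb{S}^{d-1}$. Once this is in hand, both claims reduce to pushing forward the relevant data by $h$.

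For part (a), I would exploit that $h$ commutes with $\operatorname{conv}$ and carries the vertex set of any non-degenerate convex polygon bijectively onto the vertex set of its image; non-degeneracy is preserved because $\lambda > 0$. Given a vertex $h(v)$ of $h(P)$ and a direction $u \in U$, the $U$-polygon property applied to the vertex $v$ of $P$ supplies a second vertex $v' \neq v$ of $P$ on the line through $v$ in direction $u$; pushing forward by $h$ yields a second vertex $h(v') \neq h(v)$ of $h(P)$ on the line through $h(v)$ in direction $u$. Hence $h(P)$ is a $U$-polygon.

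For part (b), bijectivity of $h$ on $\R^d$ together with $h(F\cap \ell)=h(F)\cap h(\ell)$ gives $\operatorname{card}(h(F) \cap h(\ell)) = \operatorname{card}(F \cap \ell)$, and similarly for $F'$. Consequently
\[
X_u(h(F))(h(\ell)) \,=\, X_u F(\ell) \,=\, X_u F'(\ell) \,=\, X_u(h(F'))(h(\ell))
\]
for every $u \in U$ and every $\ell \in \mathcal{L}_u^d$. Because $\ell \mapsto h(\ell)$ is a bijection of $\mathcal{L}_u^d$, the two functions $X_u(h(F))$ and $X_u(h(F'))$ agree on all of $\mathcal{L}_u^d$, which is the claim.

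There is no genuine obstacle here; the fact is essentially a routine direction-preservation verification, since the notions of $U$-polygon and of $X$-ray in a given direction are intrinsic under any affine bijection of $\R^d$ that sends lines in direction $u$ to lines in direction $u$. The only point worth being careful about is that the positivity of $\lambda$ assumed in the paper's definition of a homothety guarantees that $h$ is a well-defined affine bijection preserving non-degeneracy, which is already built into what is given.
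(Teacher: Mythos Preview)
Your argument is correct. The paper states this result as a \emph{Fact} without proof, so there is no proof in the paper to compare against; your verification via the direction-preserving bijection $\ell\mapsto h(\ell)$ on $\mathcal{L}_u^d$ is exactly the routine check the author had in mind when omitting it.
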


\begin{defi}
Let $\mathcal{E}$ be a collection of finite subsets of
$\mathbbm{R}^{d}$ and let $U\subset\mathbb{S}^{d-1}$ be a finite set
of directions. We say that the elements
of $\mathcal{E}$ are {\em determined} by the $X$-rays in the directions of $U$ if, for all $F,F' \in \mathcal{E}$, one has
$$
(X_{u}F=X_{u}F'\;\,\forall u \in U) \;  \Longrightarrow\; F=F'\,.
$$
\end{defi}

\section{General results on determination}\label{gen}

We need the following property of sets
$\varLambda\subset\R^d$, where $\langle S\rangle_{\Z} $ denotes the
$\Z$-linear hull of a set $S\subset\R^d$. In other words, $\langle
S\rangle_{\Z} $ is the Abelian group generated by $S$. 
\begin{eqnarray*}
\mbox{(Hom$^*$)}&&\mbox{For all finite subsets $F$ of $\langle \varLambda-\varLambda\rangle_{\Z}$, there is a
homothety}\\&&\mbox{$h\!:\, \mathbbm{R}^{d} \rightarrow
\mathbbm{R}^{d}$ such that $h(F)\subset \varLambda$. 
}
\end{eqnarray*}

Translates $\varLambda$ of arbitrary lattices
$L\subset\R^d$ satisfy $\langle \varLambda-\varLambda\rangle_{\Z}=L$
and are thus Delone sets with property 
(Hom$^*$). The following negative results shows that, in order to
obtain positive results on determination for Delone sets $\varLambda$ with property 
(Hom$^*$), one has to impose some restriction on the finite subsets of
$\varLambda$ to
be determined.

\begin{prop}\label{source}
Let $\varLambda\subset\R^d$ be a Delone set with property (Hom$^*$) and let
$U$ be a finite set of pairwise non-parallel
$\varLambda$-directions. Then the finite subsets of $\varLambda$ are not determined by the $X$-rays in the directions of $U$.
\end{prop}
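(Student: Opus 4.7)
The plan is a classical switching construction: I will exhibit two distinct finite subsets of $\langle\varLambda-\varLambda\rangle_{\Z}$ with the same $X$-rays in every direction of $U$, and then push them into $\varLambda$ via a homothety, which by Fact~\ref{homotu}(b) preserves the equality of $X$-rays.

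Writing $U=\{u_{1},\dots,u_{n}\}$, I first pick a non-zero $v_{i}\in\varLambda-\varLambda$ parallel to $u_{i}$ for each $i$, and then rescale to $w_{i}:=k_{i}v_{i}\in\langle\varLambda-\varLambda\rangle_{\Z}$ with positive integers $k_{i}$ chosen so that the $2^{n}$ sums $\sum_{i}\epsilon_{i}w_{i}$, $(\epsilon_{i})\in\{0,1\}^{n}$, are pairwise distinct. Such a choice exists; for instance, the choice $k_{i}:=M^{i}$ with $M$ large enough forces any would-be collision, after isolating the largest index $j$ with $\epsilon_{j}\neq\epsilon_{j}'$, to violate the estimate $M^{j}\|v_{j}\|\leqslant\sum_{i<j}M^{i}\|v_{i}\|$.

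Let $F_{0}$ denote the set of these $2^{n}$ sums and partition $F_{0}=F\sqcup F'$ by the parity of $\sum_{i}\epsilon_{i}$. For each fixed $j$, flipping the $j$-th bit gives a well-defined involution of $F_{0}$ (well-defined precisely because of the distinctness arranged above) that interchanges $F$ and $F'$ and shifts each point by $\pm w_{j}$, hence along $u_{j}$. Consequently, on every line in direction $u_{j}$ the number of points of $F$ equals the number of points of $F'$, so $X_{u_{j}}F=X_{u_{j}}F'$ for all $j$.

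Since $F_{0}$ is a finite subset of $\langle\varLambda-\varLambda\rangle_{\Z}$, property \textup{(Hom$^{*}$)} furnishes a homothety $h$ with $h(F_{0})\subset\varLambda$. The images $h(F)$ and $h(F')$ are then distinct finite subsets of $\varLambda$ whose $X$-rays in the directions of $U$ coincide by Fact~\ref{homotu}(b), establishing non-determination. The only part requiring any actual thought is the scaling step, which is used to ensure that the $2^{n}$ formal sums yield $2^{n}$ genuinely distinct points so that the switching involutions act on the \emph{set} $F_{0}$ rather than merely on the labels $(\epsilon_{i})$; everything else is bookkeeping.
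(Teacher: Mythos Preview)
Your proof is correct and rests on the same switching idea as the paper's: build a pair of finite sets in $\langle\varLambda-\varLambda\rangle_{\Z}$ with equal $X$-rays in every direction of $U$, then push them into $\varLambda$ via property~(Hom$^{*}$) and Fact~\ref{homotu}(b). The only difference is presentational: the paper proceeds by induction on $\lvert U\rvert$, at each step forming $F\cup(z+F')$ and $F'\cup(z+F)$ and reapplying (Hom$^{*}$), whereas you unroll this into a single parallelotope $\{\sum_{i}\epsilon_{i}w_{i}:\epsilon\in\{0,1\}^{n}\}$ split by parity and apply (Hom$^{*}$) once---your scaling step is exactly what guarantees the disjointness that the paper enforces step by step via the choice of $z$.
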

\begin{proof}
Since property (Hom$^*$) is invariant under
translations, we may assume, without loss of generality, that
$0\in\varLambda$. Hence, $\langle\varLambda\rangle_{\Z}\subset \langle \varLambda-\varLambda\rangle_{\Z}$. We argue by induction on $\operatorname{card}U$. The case
$\operatorname{card}U=0$ means $U=\varnothing$ and is obvious. Fix
$k\in\N_{0}$ and suppose the assertion is 
true whenever $\operatorname{card}U=k$. Let $U$ now be a set with 
$\operatorname{card}U=k+1$. By the induction hypothesis, there are
different finite subsets $F$ and $F'$ of $\varLambda$
with the same $X$-rays in the directions of $U'$, where $U'\subset U$
satisfies $\operatorname{card}U'=k$. Let $u$ be the remaining
direction of $U$ and choose a non-zero element $z\in \langle \varLambda-\varLambda\rangle_{\Z}$ parallel to
$u$ such that $z+(F\cup F')$ and $F\cup F'$ are disjoint. Then,
$F'':= F\cup (z+F')$ and
$F''':= F'\cup (z+F)$ are different finite subsets of $\langle \varLambda-\varLambda\rangle_{\Z}$ with the same $X$-rays in the
directions of $U$. By property~(Hom$^*$), there is a homothety $h\!:\,
\R^d \rightarrow \R^d$ such that $h(F''\cup F''')=h(F'')\cup
h(F''')\subset \varLambda$. It follows from Fact~\ref{homotu}(b) that $h(F'')$ and
$h(F''')$ are different finite subsets of $\varLambda$
with the same $X$-rays in the directions of $U$. 
\end{proof}

\begin{rem}\label{source2}
For other versions of the last result, compare~\cite[Theorem
  4.3.1]{GG2} and~\cite[Lemma 2.3.2]{G}. An analysis of the proof of Proposition~\ref{source} shows that, for
any Delone set $\varLambda\subset\R^d$ with property (Hom$^*$) and for 
any finite set $U$ of $k$ pairwise non-parallel
$\varLambda$-directions, there are disjoint finite subsets  $F$ and $F'$ of
$\varLambda$ with
$\operatorname{card}F=\operatorname{card}F'=2^{(k-1)}$ that have the same
$X$-rays in the directions of $U$. Consider any convex subset $C$ of 
$\R^d$ which contains $F$ and $F'$ from above. Then the finite subsets
$F_1:=(C\cap\varLambda)\setminus F$ and
$F_2:=(C\cap\varLambda)\setminus F'$ of
$\varLambda$ also have the same
$X$-rays in the directions of $U$. Whereas the points in $F$ and $F'$
are widely dispersed over a region, those in $F_1$ and $F_2$ are
contiguous in a way similar to atoms in some solid state material.  This procedure
is illustrated in Figure~\ref{fig:contig} in the case of the
 aperiodic cyclotomic model set $\varLambda_{\rm AB}$ as described in
  Example~\ref{exab} below. Note that cyclotomic model sets have
  property (Hom$^*$); see Definition~\ref{algdeldef},
  Remark~\ref{algdellem} and Proposition~\ref{cmsads} below.
\end{rem}

\begin{figure}
\centerline{\epsfysize=0.59\textwidth\epsfbox{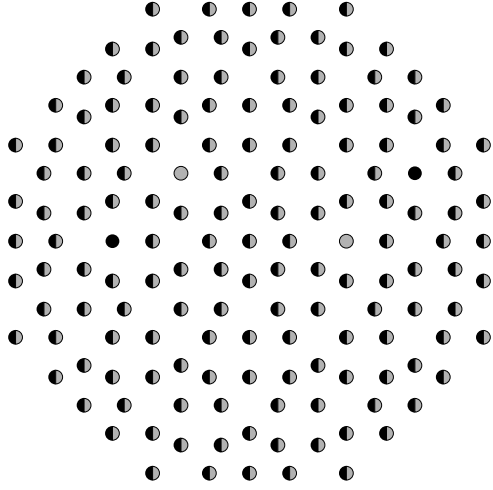}}
\caption{Two contiguous subsets of $\varLambda_{\rm AB}$ with the same
  $X$-rays in the two $\varLambda_{\rm AB}$-directions with slopes $0$ and
  $1$, respectively.}
\label{fig:contig}
\end{figure}

The subsequent positive results on determination are
of limited use in practice since, in general, they do not
comply with the restriction to few high-density directions mentioned
earlier. The first one is well-known and follows from the same
arguments as in the
proof of~\cite[Theorem
4.3.3]{GG2}.

\begin{fact}\label{m+1}
Let $d\geq 2$ and let
$\varLambda$ be a Delone set in $\R^d$. Further, let $U$ be any set of $k+1$ pairwise non-parallel
$\varLambda$-directions where $k\in \mathbbm{N}_{0}$. Then the finite
subsets of 
$\varLambda$ with cardinality less than or equal to $k$ are determined by the $X$-rays in
the directions of $U$. Moreover, all finite subsets $F$ of 
$\varLambda$ with cardinality less than or equal to $k$ satisfy $F=G^{F}_{U}$.
\end{fact}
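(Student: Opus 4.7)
The plan is to prove the stronger assertion $F = G^{F}_{U}$ first; the determination statement will then follow at once, since $G^{F}_{U}$ depends on $F$ only through the supports of the $X$-rays $X_{u}F$ with $u \in U$.

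For the equality $F = G^{F}_{U}$, the inclusion $F \subset G^{F}_{U}$ is immediate from the definition. For the reverse inclusion, I would fix an arbitrary point $x \in G^{F}_{U}$ and enumerate $U = \{u_{1},\dots,u_{k+1}\}$. By the definition of $G^{F}_{U}$, for each $i \in \{1,\dots,k+1\}$ the unique line $\ell_{i}$ through $x$ in direction $u_{i}$ must lie in $\operatorname{supp}(X_{u_{i}}F)$, so I can choose a point $f_{i} \in F \cap \ell_{i}$. This yields $k+1$ (not necessarily distinct) points of $F$.

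The crux is now a pigeonhole argument. Since $\operatorname{card}F \leq k < k+1$, there exist indices $i \neq j$ with $f_{i} = f_{j} =: f$. The point $f$ then lies on both $\ell_{i}$ and $\ell_{j}$, which are distinct lines because the directions $u_{i}$ and $u_{j}$ are non-parallel. Since $x$ also lies on both $\ell_{i}$ and $\ell_{j}$, and two distinct lines in $\mathbb{R}^{d}$ meet in at most one point, we conclude $x = f \in F$. Thus $G^{F}_{U} \subset F$, proving $F = G^{F}_{U}$.

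Finally, for determination, suppose $F,F' \subset \varLambda$ both have cardinality at most $k$ and satisfy $X_{u}F = X_{u}F'$ for every $u \in U$. Then $\operatorname{supp}(X_{u}F) = \operatorname{supp}(X_{u}F')$ for each $u \in U$, and hence $G^{F}_{U} = G^{F'}_{U}$ directly from Definition~\ref{xray..}. Applying the first part to both sets gives $F = G^{F}_{U} = G^{F'}_{U} = F'$, as required. No obstacle of substance arises here: the whole argument reduces to the pigeonhole step, which is the only non-formal ingredient, and the Delone hypothesis and the dimension $d \geq 2$ enter only to ensure that pairwise non-parallel directions give pairwise distinct lines through a common point.
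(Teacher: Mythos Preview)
Your argument is correct and is precisely the standard pigeonhole proof that the paper defers to by citing~\cite[Theorem 4.3.3]{GG2}; there is nothing to add. One cosmetic remark: neither the Delone hypothesis nor $d\geq 2$ is actually used in your argument (non-parallel directions yield distinct lines through a point in any $\mathbbm{R}^d$), so your closing sentence slightly overstates their role---these hypotheses are present only so that enough pairwise non-parallel $\varLambda$-directions exist in the first place.
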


Let $d\geq
2$ and let $\varLambda$ be a Delone set in $\R^d$ with property (Hom$^*$). Remark~\ref{source2} and Fact~\ref{m+1} show that the finite
subsets of 
$\varLambda$ with cardinality less than or equal to $k$ are determined by
the $X$-rays in
any set of $k+1$ pairwise non-parallel $\varLambda$-directions but not by
 the $X$-rays in $1+\lfloor\log_{2}k\rfloor$ pairwise non-parallel
$\varLambda$-directions. 

\begin{fact}\label{dirdense}
Let $d\geq 2$ and let $\varLambda\subset \R^d$ be relatively dense. Then
the set of $\varLambda$-directions is dense in $\mathbb{S}^{d-1}$.\end{fact}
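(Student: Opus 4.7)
The plan is to fix an arbitrary direction $u\in\mathbb{S}^{d-1}$ and an $\varepsilon>0$, and then to exhibit a $\varLambda$-direction within distance $\varepsilon$ of $u$ by looking at the interpoint vector from a fixed base point $x\in\varLambda$ to a well-chosen point of $\varLambda$ far out along the ray emanating from $x$ in direction $u$. Relative density with some parameter $R>0$ already guarantees $\varLambda\neq\varnothing$, so I would first fix any such $x\in\varLambda$.

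Next, for each real $t>R$, relative density yields some $y_{t}\in\varLambda\cap B_{R}(x+tu)$, so that
$$y_{t}-x\,=\,tu+w_{t}\quad\text{with }\|w_{t}\|\leq R.$$
Since $t>R$ forces $y_{t}\neq x$, the unit vector $v_{t}:=(y_{t}-x)/\|y_{t}-x\|$ is a genuine $\varLambda$-direction, and it remains to show $v_{t}\to u$ as $t\to\infty$.

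To do this I would expand
$$v_{t}-u\,=\,\frac{w_{t}+\bigl(t-\|tu+w_{t}\|\bigr)u}{\|tu+w_{t}\|},$$
bound the numerator by $\|w_{t}\|+\bigl|t-\|tu+w_{t}\|\bigr|\leq 2R$ via the reverse triangle inequality, and bound the denominator from below by $t-R$. This yields $\|v_{t}-u\|\leq 2R/(t-R)$, which is smaller than $\varepsilon$ for all sufficiently large $t$; for any such $t$, the direction $v_{t}$ is the desired $\varLambda$-direction.

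The whole argument is elementary and uses only the definition of relative density together with the triangle inequality. I anticipate no real obstacle; the only step requiring any care is the elementary estimate on $\|v_{t}-u\|$ displayed above.
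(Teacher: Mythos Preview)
Your argument is correct and is exactly the natural elementary proof: pick a base point $x\in\varLambda$, use relative density to find points of $\varLambda$ near $x+tu$ for large $t$, and show the resulting interpoint directions converge to $u$. The estimate $\|v_t-u\|\leq 2R/(t-R)$ is fine (in fact the paper uses open balls, so $\|w_t\|<R$, but this only helps).

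Note, however, that the paper does \emph{not} supply its own proof of this statement: it is recorded as a ``Fact'' and left unproved, presumably because the argument is elementary and well known. So there is no proof in the paper to compare against; your write-up simply fills in the expected details.
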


\begin{prop}\label{bounded}
Let $d\geq 2$, let $r>0$, and let
$\varLambda\subset\R^{d}$ be a Delone set of finite local
complexity. Then, there is a set $U$ of two non-parallel
$\varLambda$-directions such that the subsets of patches of radius $r$ of
$\varLambda$ are determined by the $X$-rays in the directions of
$U$. Moreover, there is a set $U$ of three pairwise non-parallel
$\varLambda$-directions such that, for all subsets $F$ of patches of radius $r$ of
$\varLambda$, one has $F=G^{F}_{U}$.
\end{prop}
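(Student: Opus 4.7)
The plan is to combine a switching-component argument (for the determination assertion) and a ghost-point analysis (for the identity $F = G^F_U$), fuelled in both cases by the finiteness of the difference set in any bounded region afforded by finite local complexity, together with Fact~\ref{dirdense}, which provides a dense supply of $\varLambda$-directions in $\mathbb{S}^{d-1}$.

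For the first assertion I would fix any angle $\theta_0 \in (0, \pi/2]$ in advance and establish the following preliminary localisation: if $F, F' \subset \varLambda$ are non-empty subsets of patches of radius $r$ and $u_1, u_2$ are non-parallel directions with $\angle(u_1, u_2) \geq \theta_0$ for which $X_{u_i}F = X_{u_i}F'$ ($i=1,2$), then the orthogonal projections of $F$ and $F'$ onto $u_i^\perp$ coincide as multisets; since each of $F, F'$ lies in some ball $B_r(x)$, a routine computation in $\operatorname{span}(u_1, u_2)$ combined with the orthogonal decomposition gives $F \cup F' \subset B_{R'}(y)$ for some $y \in \R^d$ and some $R' = R'(r, \theta_0)$ independent of $F, F'$. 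Finite local complexity then forces $D := ((\varLambda - \varLambda) \cap B_{2R'}(0)) \setminus \{0\}$ to be a finite set, and hence so is the set of directions parallel to elements of $D$. Using Fact~\ref{dirdense}, I would pick $\varLambda$-directions $u_1, u_2$ with $\angle(u_1, u_2) \geq \theta_0$ such that neither is parallel to any element of $D$. If some $F \neq F'$ had identical X-rays in these two directions, the standard switching-component argument applied to $F \triangle F'$ would produce a non-zero $a_2 - a_1 \in (\varLambda - \varLambda) \cap \R u_1$; by the localisation, $a_2 - a_1 \in D$, and hence $u_1$ would be parallel to an element of $D$, a contradiction.

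For the second assertion I would take any two non-parallel $\varLambda$-directions $u_1, u_2$ and analyse when $G^F_U \setminus F$ can fail to be empty for $U = \{u_1, u_2, u_3\}$. A point $p \in G^F_U \setminus F$ is characterised by the existence of $f_1, f_2, f_3 \in F$ with $p \in (f_1 + \R u_1) \cap (f_2 + \R u_2)$ and $f_3 - p \in \R u_3 \setminus \{0\}$; this forces $f_2 - f_1 \in \operatorname{span}(u_1, u_2)$, and in that case $p - f_3$ is an explicit function of $u_1$, $f_2 - f_1$ and $f_3 - f_1$. Because $F$ has diameter at most $2r$, both $f_2 - f_1$ and $f_3 - f_1$ lie in the finite set $(\varLambda - \varLambda) \cap B_{2r}(0)$ provided by finite local complexity, so as $F$ ranges over subsets of patches of radius $r$ and $(f_1, f_2, f_3)$ over $F^3$, the vector $p - f_3$ assumes only finitely many values, hence only finitely many directions. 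Applying Fact~\ref{dirdense} a second time, I would pick $u_3$ to be a $\varLambda$-direction that is non-parallel to $u_1, u_2$ and to any of these finitely many directions; for this choice no ghost $p$ can exist, which is precisely $F = G^F_U$ for every subset $F$ of a patch of radius $r$.

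The main obstacle is the apparent circular dependence in the first assertion: the localising radius $R'$ that produces the finite set $D$ depends on the angle between $u_1$ and $u_2$, whereas the very choice of $u_1, u_2$ is supposed to be guided by $D$. I would resolve this by fixing any positive lower bound $\theta_0$ on $\angle(u_1, u_2)$ at the outset, which decouples $R'$ from the final choice of directions and still leaves enough room, by density of $\varLambda$-directions, to pick admissible $u_1, u_2$ avoiding the finitely many bad directions.
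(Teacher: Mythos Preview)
Your argument is correct in both parts, and for the second assertion it is essentially the paper's own argument: fix two non-parallel $\varLambda$-directions $u_1,u_2$, observe that, by finite local complexity, only finitely many directions occur between a point of $F$ and a point of the grid $G^{F}_{\{u_1,u_2\}}$ as $F$ ranges over subsets of patches of radius $r$, and pick $u_3$ avoiding these.

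For the first assertion your route differs from the paper's. You argue symmetrically: a preliminary localisation lemma bounds the diameter of $F\cup F'$ by some $R'=R'(r,\theta_0)$ whenever $X_{u_i}F=X_{u_i}F'$ and $\angle(u_1,u_2)\geq\theta_0$; then both $u_1$ and $u_2$ are chosen to avoid the finitely many directions in $(\varLambda-\varLambda)\cap B_{2R'}(0)$, and a switching argument finishes. The paper instead works asymmetrically: it fixes an arbitrary $\varLambda$-direction $u$, observes that the projection of $G^{F}_{\{u\}}\cap\varLambda$ onto $u^{\perp}$ has diameter strictly below $2r$ (with a uniform maximum, by finite local complexity), and then picks the second direction $u'$ in a neighbourhood of $u^{\perp}$ so that every line in direction $u'$ meets $G^{F}_{\{u\}}\cap\varLambda$ in points at distance less than $2r$; choosing $u'$ additionally outside the finite set $V$ of ``short'' $\varLambda$-directions forces each such line to meet $G^{F}_{\{u\}}\cap\varLambda$ in at most one point, whence $F=F'$. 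The paper thereby sidesteps the circularity you had to resolve by pre-fixing $\theta_0$, at the cost of a slightly more delicate geometric step (the uniform bound on projection diameters). Your approach is a bit more modular---the localisation lemma is useful in its own right---while the paper's avoids introducing the auxiliary radius $R'$. One small caveat: take $\theta_0<\pi/2$ so that the set of directions with $\angle(u_1,u_2)\geq\theta_0$ has non-empty interior in $\mathbb{S}^{d-1}$ in the planar case, ensuring you can actually find admissible $u_2$.
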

\begin{proof}
We denote by $\mathcal{P}_r(\varLambda)$ the collection of subsets of patches of radius $r$ of
$\varLambda$. For the first assertion, note that the finite
local complexity of $\varLambda$ implies that the set $V$ of $\varLambda$-directions $v$ with the
property that there is a set $F\in \mathcal{P}_r(\varLambda)$ and a
line $\ell$ in $\R^d$ in direction $v$ with more than one point of $F$
on $\ell$ is finite. Let $u$ be an arbitrary
$\varLambda$-direction. Then, for every $F\in
\mathcal{P}_{r}(\varLambda)$, one has $F\subset
G^{F}_{\{u\}}\cap\varLambda$. Choose $u''\in\mathbb{S}^{d-1}\cap u^{\perp}$ and note
that, for every $F\in
\mathcal{P}_{r}(\varLambda)$, the orthogonal projection
$(G^{F}_{\{u\}}\cap\varLambda)|u^{\perp}$ of the set
$G^{F}_{\{u\}}\cap\varLambda$ on the hyperplane $u^{\perp}$ is finite
with diameter $D_{u}^{F}<2r$. Moreover, by the finite
local complexity of $\varLambda$, the set $\{D_{u}^{F}\,|\,F\in
\mathcal{P}_{r}(\varLambda)\}$ of diameters is finite. This implies
the existence of a neighbourhood $W$ 
of $u''$ in $\mathbb{S}^{d-1}$ with the
property that, for each line $\ell$ in
 a direction $w\in W$ and any set $F\in
\mathcal{P}_{r}(\varLambda)$, any two elements of the set
$\ell\cap(G^{F}_{\{u\}}\cap\varLambda)$ have a distance less than
$2r$. Since the set of $\varLambda$-directions is dense in
$\mathbb{S}^{d-1}$ by Fact~\ref{dirdense}, and by the finiteness of the
set $V$, this observation shows that one can choose a
$\varLambda$-direction $u'\in W\setminus V$ that is not parallel to $u$. We claim that the elements of $\mathcal{P}_{r}(\varLambda)$ are determined by the $X$-rays in the
directions of $\{u,u'\}$. To this end, let
$F,F'\in\mathcal{P}_{r}(\varLambda)$ satisfy $X_{u}F=X_{u}F'$. Then,
one has
$F,F'\subset G^{F}_{\{u\}}\cap\varLambda$. In order to demonstrate that the identity
$X_{u'}F=X_{u'}F'$ implies the equality $F=F'$, it suffices to show
that each line $\ell$ in direction $u'$ meets at most one element of
$G^{F}_{\{u\}}\cap\varLambda$. Assume the existence of two distinct
elements, say $\lambda$ and
$\lambda'$, in $\ell\cap(G^{F}_{\{u\}}\cap\varLambda)$. Then, by
construction, the distance between $\lambda$ and
$\lambda'$ is less than
$2r$. Hence, $\{\lambda,\lambda'\}\in \mathcal{P}_{r}(\varLambda)$, and further
$u'\in V$, a contradiction. 

For the second part, let $u,u'$ be two
arbitrary non-parallel $\varLambda$-directions and set
$U':=\{u,u'\}$. Note that the finite
local complexity of $\varLambda$ implies that the set $V$ of $\varLambda$-directions $v$ with the
property that there is a set $F\in \mathcal{P}_r(\varLambda)$ and a
line $\ell$ in $\R^d$ in direction $v$ with more than one point of the
finite set $G^{F}_{U'}$
on $\ell$ is finite. Since the set of $\varLambda$-directions is dense in
$\mathbb{S}^{d-1}$ by Fact~\ref{dirdense}, this observation shows that one can choose a
$\varLambda$-direction $u''\notin  V$ that is not parallel to $u$ and
$u'$. By construction, the assertion follows with $U:=\{u,u',u''\}$. 
\end{proof}



\section{Determination of convex subsets of algebraic Delone sets}

\subsection{Algebraic Delone sets}

 For $\varLambda\subset\LC$, we denote by $\mathbbm{K}_{\varLambda}$ the field extension of $\LQ$ that is given by 
$$
\mathbbm{K}_{\varLambda}\,\,:=\,\,\Q\left(\big(\varLambda-\varLambda\big)\cup\big(\overline{\varLambda-\varLambda}\big)\right)\,,
$$
and, further, set 
$
\mathbbm{k}_{\varLambda}:=\mathbbm{K}_{\varLambda}\cap\R$, the maximal
real subfield of $\mathbbm{K}_{\varLambda}$. The following notion will
be useful; see also~\cite{H3,H4} for generalizations and results related to
those presented below.

\begin{defi}\label{algdeldef}
A Delone set $\varLambda\subset\R^2$ is called an {\em algebraic Delone
  set} if it satisfies the following properties:
\begin{eqnarray*}
\mbox{(Alg)}&&\left[\mathbbm{K}_{\varLambda}:\Q\right]<\infty\,.\\
\mbox{(Hom)}&&\mbox{For all finite subsets $F$ of $\mathbbm{K}_{\varLambda}$, there is a
homothety}\\&&\mbox{$h\!:\, \mathbbm{R}^{2} \rightarrow
\mathbbm{R}^{2}$ such that $h(F)\subset \varLambda$\,. 
}
\end{eqnarray*}
\end{defi} 

Translates $\varLambda$ of the square lattice $\Z^2=\Z[i]$ are examples of algebraic Delone sets, with
$\mathbbm{K}_{\varLambda}=\Q(i)$.

\begin{rem}\label{algdellem}
Note that, for any algebraic Delone set $\varLambda$, the field
$\mathbbm{k}_{\varLambda}$ is a real algebraic number field. Trivially,
property~(Hom) for $\varLambda$ implies property~(Hom$^*$).
\end{rem}

Lagarias~\cite{Lagarias} defined the notion of {\em finitely
  generated Delone sets} $\varLambda\subset\R^d$. These are
Delone sets $\varLambda$ with the property that the Abelian group
  $\langle \varLambda-\varLambda\rangle_{\Z}$  is finitely
generated. The last property is always fulfilled by Delone sets of
finite local complexity, which are also called {\em Delone sets of
  finite type}; see~\cite[Theorem 2.1]{Lagarias}. 

\begin{prop}
Let $\varLambda$ be an algebraic Delone set. If
$\varLambda-\varLambda$ is contained in the ring of algebraic
integers, then $\varLambda$ is a finitely generated Delone set.
\end{prop}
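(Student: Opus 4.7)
The plan is to exploit property (Alg) to identify $\mathbbm{K}_{\varLambda}$ as an algebraic number field and then to realize $\langle\varLambda-\varLambda\rangle_{\Z}$ as a subgroup of a known finitely generated abelian group, namely the ring of integers of $\mathbbm{K}_{\varLambda}$.

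First I would observe that property (Alg) means $\mathbbm{K}_{\varLambda}$ is a number field of finite degree $n:=[\mathbbm{K}_{\varLambda}:\Q]<\infty$. By the very definition of $\mathbbm{K}_{\varLambda}$, one has $\varLambda-\varLambda\subset\mathbbm{K}_{\varLambda}$, and by the standing hypothesis every element of $\varLambda-\varLambda$ is an algebraic integer. Consequently, $\varLambda-\varLambda$ is contained in the ring $\mathcal{O}_{\mathbbm{K}_{\varLambda}}$ of algebraic integers of $\mathbbm{K}_{\varLambda}$.

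Next I would invoke the standard fact from algebraic number theory that $\mathcal{O}_{\mathbbm{K}_{\varLambda}}$ is a free $\Z$-module of rank $n$. In particular, $\mathcal{O}_{\mathbbm{K}_{\varLambda}}$ is a finitely generated abelian group, and since $\Z$ is a principal ideal domain, every subgroup of this free $\Z$-module is itself free of rank at most $n$, hence finitely generated. The abelian group $\langle\varLambda-\varLambda\rangle_{\Z}$ is by construction the $\Z$-submodule of $\mathcal{O}_{\mathbbm{K}_{\varLambda}}$ generated by $\varLambda-\varLambda$, so this subgroup argument applies directly and yields that $\langle\varLambda-\varLambda\rangle_{\Z}$ is finitely generated. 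By Lagarias' definition recalled just above, $\varLambda$ is therefore a finitely generated Delone set.

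I do not foresee any real obstacle here: the argument is essentially a short chain of inclusions followed by the appeal to the classical theorem that $\mathcal{O}_{K}$ is a free $\Z$-module of rank $[K:\Q]$ for any number field $K$, together with the Noetherian property of $\Z$. The only small point to handle carefully is the verification that the hypothesis of the proposition indeed places $\varLambda-\varLambda$ inside $\mathcal{O}_{\mathbbm{K}_{\varLambda}}$ and not merely inside the ring of all algebraic integers, but this is automatic from $\varLambda-\varLambda\subset\mathbbm{K}_{\varLambda}$.
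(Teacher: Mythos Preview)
Your proposal is correct and follows essentially the same route as the paper: both arguments observe that $\varLambda-\varLambda$ lies in the ring of integers $\mathcal{O}_{\mathbbm{K}_{\varLambda}}$ of the number field $\mathbbm{K}_{\varLambda}$, invoke the classical fact that this ring is a free $\Z$-module of finite rank, and conclude that the subgroup $\langle\varLambda-\varLambda\rangle_{\Z}$ is finitely generated. Your write-up is slightly more explicit (spelling out why $\varLambda-\varLambda\subset\mathcal{O}_{\mathbbm{K}_{\varLambda}}$ and citing the submodule theorem over a PID), but there is no substantive difference.
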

\begin{proof}
If
$\varLambda-\varLambda$ is contained in the ring of algebraic
integers, then the Abelian group $\langle \varLambda-\varLambda\rangle_{\Z}$ is a subgroup of the
ring of integers in 
$\mathbbm{K}_{\varLambda}$. Since the latter
 is the maximal order of $\mathbbm{K}_{\varLambda}$
and thus a free Abelian group of finite rank, the assertion follows; cf.~\cite[Ch.~2, Sec.~2]{Bo}.
\end{proof}

\subsection{$U$-polygons in algebraic Delone sets}

The following fact follows immediately from property (Hom) in conjunction with Fact~\ref{homotu}(a). 

\begin{fact}\label{upol4}
Let $\varLambda$ be an algebraic Delone set and let $U\subset
\mathbb{S}^{1}$ be a finite set of directions. Then there is a $U$-polygon in
$\mathbbm{K}_{\varLambda}$ if and only if there is a  $U$-polygon in $\varLambda$.
\end{fact}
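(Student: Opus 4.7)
The statement is an immediate biconditional, and the plan is to derive each direction from the two ingredients named just before it: Fact~\ref{homotu}(a), which says homotheties preserve the $U$-polygon property, and property~(Hom) from Definition~\ref{algdeldef}, which lets us push any finite subset of $\mathbbm{K}_{\varLambda}$ into $\varLambda$ by a homothety.

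For the ``only if'' direction, suppose $P\subset\varLambda$ is a $U$-polygon with vertex set $V\subset\varLambda$. I would pick any vertex $v_0\in V$ and apply the translation $h\!:z\mapsto z-v_0$, which is a homothety (with dilation factor $\lambda=1$). By Fact~\ref{homotu}(a), $h(P)$ is again a $U$-polygon. Its vertex set is $V-v_0\subset \varLambda-\varLambda$, which by the very definition of $\mathbbm{K}_{\varLambda}$ lies in $\mathbbm{K}_{\varLambda}$. Thus $h(P)$ is a $U$-polygon in $\mathbbm{K}_{\varLambda}$.

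For the ``if'' direction, suppose $P$ is a $U$-polygon whose vertex set $F$ is a finite subset of $\mathbbm{K}_{\varLambda}$. By property~(Hom) there is a homothety $h\!:\R^2\to\R^2$ such that $h(F)\subset\varLambda$. Fact~\ref{homotu}(a) then gives that $h(P)$ is a $U$-polygon, and since the vertices of $h(P)$ are precisely the image $h(F)\subset\varLambda$, it is a $U$-polygon in $\varLambda$.

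There is no real obstacle: both directions reduce to a single application of Fact~\ref{homotu}(a) composed with either the trivial translation or the homothety supplied by~(Hom). The only point that deserves a word of care is to observe, in the ``only if'' direction, that a translation is indeed a homothety in the sense of the paper (with $\lambda=1$), so that Fact~\ref{homotu}(a) genuinely applies.
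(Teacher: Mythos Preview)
Your argument is correct and matches the paper's approach, which gives only the one-line justification that the fact follows from property~(Hom) together with Fact~\ref{homotu}(a). Note, however, that you have swapped the labels: assuming a $U$-polygon in $\varLambda$ is the ``if'' direction of the stated biconditional, and assuming one in $\mathbbm{K}_{\varLambda}$ is the ``only if'' direction.
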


\begin{lem}\label{uleq3}
Let $\varLambda$ be an algebraic Delone set. If\/ $U$ is any set of up to $3$ pairwise non-parallel $\varLambda$-directions, then there exists a $U$-polygon in $\varLambda$.
\end{lem}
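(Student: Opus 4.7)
The plan is to apply Fact~\ref{upol4}: it suffices to exhibit a $U$-polygon whose vertices lie in $\mathbbm{K}_{\varLambda}$, since property~(Hom) then places a homothetic copy inside $\varLambda$. For $|U|\le 2$ this is routine. If $|U|=0$, any non-degenerate triangle with vertices in $\mathbbm{K}_{\varLambda}$ works, for instance $\{0,w_1,w_2\}$ for two $\R$-linearly independent $w_1,w_2\in\varLambda-\varLambda$. If $|U|\in\{1,2\}$, the parallelogram spanned by two non-parallel $w_1,w_2\in\varLambda-\varLambda$, chosen so that each prescribed direction is parallel to some $w_j$, is visibly a $U$-polygon because each of its sides is parallel to some $u\in U$.

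The real case is $|U|=3$, with directions $u_1,u_2,u_3$. I would first fix $w_i\in(\varLambda-\varLambda)\cap\R u_i\subset\mathbbm{K}_{\varLambda}$ for $i=1,2,3$ and write $w_3=\alpha w_1+\beta w_2$ with unique $\alpha,\beta\in\R$, both non-zero because $u_3$ is parallel to neither $u_1$ nor $u_2$. Solving this equation together with its complex conjugate by Cramer's rule expresses $\alpha,\beta$ as rational functions of $w_i,\bar w_i$, all of which lie in $\mathbbm{K}_{\varLambda}$; being real, $\alpha,\beta$ then lie in $\mathbbm{K}_{\varLambda}\cap\R=\mathbbm{k}_{\varLambda}$. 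I then propose the centrally symmetric hexagon $H$ with vertex set $\{\pm\alpha w_1,\pm\beta w_2,\pm w_3\}\subset\mathbbm{K}_{\varLambda}$. Writing $V_1=\alpha w_1$, $V_2=-\beta w_2$, $V_3=w_3$, the relation $w_3=\alpha w_1+\beta w_2$ reads $V_3=V_1-V_2$; so in the $\R$-basis $\{V_1,V_2\}$ of $\R^2$ the six vertices become the lattice points $\pm(1,0),\pm(0,1),\pm(1,-1)$, whose convex hull is visibly a convex hexagon, and this passes back to convexity of $H$.

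What remains is to verify the $U$-polygon condition at each vertex of $H$. Since both $H$ and $U$ are stable under $z\mapsto -z$, it is enough to check the three vertices $V_1,V_2,V_3$; and at each of these the three prescribed directions are handled by the single identity $w_3=\alpha w_1+\beta w_2$. For example, at $V_1$ the line in direction $u_1$ contains $-V_1$, the line in direction $u_2$ contains $V_3=V_1+\beta w_2$, and the line in direction $u_3$ contains $V_2=V_1-w_3$; the checks at $V_2$ and $V_3$ are analogous. The main obstacle, I believe, is identifying the correct scalars $\alpha,-\beta,1$ along the three diameters of $H$ so that the $U$-polygon condition holds simultaneously at every vertex; once these scalars are fixed, the convexity of $H$, the required six coincidences, and the inclusion of the vertices in $\mathbbm{K}_{\varLambda}$ all follow immediately, and Fact~\ref{upol4} finishes the argument.
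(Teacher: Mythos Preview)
Your argument is correct and is essentially the same construction as the paper's: both build an affinely regular hexagon in $\mathbbm{K}_{\varLambda}$ whose three edge directions (equivalently, whose three main diagonals) are parallel to $u_1,u_2,u_3$, and then invoke Fact~\ref{upol4}. The paper phrases this as ``construct a triangle in $\mathbbm{K}_{\varLambda}$ with sides parallel to the given directions and fit six congruent copies together into an affinely regular hexagon'', whereas you write the hexagon down directly as $\{\pm\alpha w_1,\pm\beta w_2,\pm w_3\}$; your Cramer's rule computation showing $\alpha,\beta\in\mathbbm{k}_{\varLambda}$ is exactly what underlies the paper's one-line claim that ``if two of the vertices are chosen in $\mathbbm{K}_{\varLambda}$, the third is automatically in $\mathbbm{K}_{\varLambda}$''.
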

\begin{proof}
Without loss of generality, we may assume that $\operatorname{card}U=3$. First, construct a triangle in $\mathbbm{K}_{\varLambda}$ having sides parallel to the
given directions of $U$. If two of the vertices are chosen in
$\mathbbm{K}_{\varLambda}$, then the third is automatically in
$\mathbbm{K}_{\varLambda}$. Now, fit six congruent versions of this triangle together in the obvious way
to make an
affinely regular hexagon in $\mathbbm{K}_{\varLambda}$. The latter is
then a $U$-polygon in $\mathbbm{K}_{\varLambda}$ and the assertion  follows from
Fact~\ref{upol4}.
\end{proof}

\begin{prop}\label{characungen}
Let $\varLambda$ be an algebraic Delone set and let $U$ be a set of two or more pairwise non-parallel $\varLambda$-directions. The following statements are equivalent:
\begin{itemize}
\item[(i)]
The convex subsets of $\varLambda$ are determined by the $X$-rays in the directions of $U$.
\item[(ii)]
There is no $U$-polygon in $\varLambda$.
\end{itemize}
\end{prop}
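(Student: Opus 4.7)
The plan is to prove both implications by contraposition, following the classical Gardner--Gritzmann strategy for convex lattice subsets (cf.~\cite{GG}). Property~(Hom) and Fact~\ref{upol4} bridge $U$-polygons between $\mathbbm{K}_\varLambda$ and $\varLambda$ whenever needed.

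For the direction ``(ii) fails $\Rightarrow$ (i) fails'': suppose a $U$-polygon $P$ exists with vertex set $V\subset\varLambda$. Order the $2N$ vertices cyclically as $v_1,\dots,v_{2N}$ along $\partial P$, and colour them alternately as $V_+:=\{v_i : i \text{ odd}\}$ and $V_-:=\{v_i : i \text{ even}\}$. For each $u\in U$, the parallel $u$-lines through $V$ define a perfect matching of $V$ by parallel (hence non-crossing) chords, and a non-crossing perfect matching of a cyclically ordered $2N$-point set always pairs points of opposite parity (since each chord must enclose an even number of vertices on each side). Hence $X_uV_+=X_uV_-$ for every $u\in U$. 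Define $F:=(\varLambda\cap P)\setminus V_-$ and $F':=(\varLambda\cap P)\setminus V_+$. At each vertex $v$ of $P$ there is a strict supporting line of $P$ (tangent to $P$ only at $v$), so for $v\in V_-$ the finite set $F\subset P\setminus\{v\}$ lies entirely in the corresponding open half-plane; hence $v\notin\operatorname{conv}(F)$, and similarly for $V_+$ and $F'$. Consequently, $F$ and $F'$ are convex subsets of $\varLambda$, distinct, and with identical $X$-rays in the directions of $U$.

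For the converse ``(i) fails $\Rightarrow$ (ii) fails'': suppose $F\ne F'$ are convex subsets of $\varLambda$ with $X_uF=X_uF'$ for all $u\in U$, so $A:=F\setminus F'$ and $A':=F'\setminus F$ are non-empty, disjoint finite subsets of $\varLambda$ with identical $X$-rays. A Darboux-type alternating walk on $A\cup A'$ then produces a $U$-polygon inside $\varLambda$: starting at an extreme point $v_1$ of $\operatorname{conv}(A\cup A')$, say $v_1\in A$, the $X$-ray identity forces, for each $u\in U$, a partner in $A'$ on the $u$-line through $v_1$, and the convexity of $F'$ allows such a partner to be chosen extreme on the opposite side. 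Iterating across the directions of $U$ yields a closed alternating polygonal cycle $v_1,v_2,\dots,v_{2k}$ whose vertex set spans a convex polygon in $\varLambda$ satisfying the $U$-polygon property at every vertex.

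The main obstacle lies in the Darboux walk: one must verify that suitable extreme partners always exist at each step, that the walk closes up in finitely many steps, and that the resulting polygon is non-degenerate and carries the $U$-pairing property at \emph{every} vertex, not only at $v_1$. The convexity of $F$ and $F'$ is used essentially here, because every extreme point of $\operatorname{conv}(A\cup A')$ must lie on the boundary of both $\operatorname{conv} F$ and $\operatorname{conv} F'$, which sharply restricts where partners can occur and ultimately forces the $U$-polygon structure.
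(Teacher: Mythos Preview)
Your first implication (negation of (ii) $\Rightarrow$ negation of (i)) matches the paper's argument, with welcome extra detail on why the alternating vertex classes have equal $X$-rays and why the resulting $F,F'$ are genuinely convex in $\varLambda$.

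The second implication has a genuine gap. The ``Darboux-type alternating walk'' you sketch is not how this direction is proved, and as described it cannot be completed. Granting that an extreme point $v_1\in A$ of $\operatorname{conv}(A\cup A')$ does have an $A'$-partner on each $u$-line (since $X_uA=X_uA'$ and $v_1\in A$), your walk is still underspecified (which $u\in U$ is used at step $k$?), there is no argument that such a partner can be taken to be a \emph{vertex} of $\operatorname{conv}(A\cup A')$, no mechanism forcing the cycle to close or to be convex, and no reason why the resulting polygon should carry the $U$-pairing property at \emph{every} vertex rather than just along the chosen walk. You also appear to conflate two distinct tools: Darboux's theorem on second midpoint polygons is invoked in the paper for Theorem~\ref{finitesetncr0gen} (the cross-ratio constraint), not for the present proposition.

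The paper instead follows~\cite[Theorem~5.5]{GG}. The engine there is the structural~\cite[Lemma~5.2]{GG}, which governs how the vertices of $\operatorname{conv}F$ and $\operatorname{conv}F'$ must sit relative to one another when $X_uF=X_uF'$ for all $u\in U$, and which the paper notes extends to the present setting. Together with Lemma~\ref{uleq3} (replacing~\cite[Lemma~4.4]{GG}), this yields a $U$-polygon in $\mathbbm{Q}(\varLambda)\subset\mathbbm{K}_\varLambda$; Fact~\ref{upol4} (via property~(Hom)) then transports it into $\varLambda$. Your proposal neither invokes nor reproves that structural lemma, and without it the construction does not go through.
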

\begin{proof}
For (i) $\Rightarrow$ (ii), suppose the existence of a $U$-polygon $P$
in $\varLambda$. Partition the vertices of $P$ into two
disjoint sets $V,V'$, where the elements of these sets alternate round
the boundary $\operatorname{bd}P$ of $P$. Since $P$ is a $U$-polygon, each line
in the plane parallel to some $u\in U$ that contains a point in $V$
also contains a point in $V'$. In particular, one sees that $\operatorname{card}V=\operatorname{card}V'$. Set
$
C:=(\varLambda\cap P)\setminus (V\cup V')
$. Then, $F:=C\cup V$ and $F':=C\cup V'$ are different convex subsets of $\varLambda$ with the same $X$-rays in the directions of $U$.

For (ii) $\Rightarrow$ (i), suppose the existence of two different
convex subsets of $\varLambda$ with the same $X$-rays in the
directions of $U$. Since the property of being an algebraic Delone set is invariant under translations, we may assume, without loss of generality,
that $0\in\varLambda$, whence $\varLambda\subset \varLambda
-\varLambda$. Then, by the same argumentation as in the proof of
the corresponding direction of~\cite[Theorem 5.5]{GG}, there follows the existence of a $U$-polygon in
$\Q(\varLambda)\subset\mathbbm{K}_{\varLambda}$. More precisely, one has to use Lemma~\ref{uleq3} instead of~\cite[Lemma
4.4]{GG} and note that~\cite[Lemma
5.2]{GG} extends to the more general situation needed here. Fact~\ref{upol4} completes the proof.
\end{proof}

Let $(t_1,t_2,t_3,t_4)$ be an ordered tuple of four pairwise distinct
elements of the set $\mathbbm{R}\cup\{\infty\}$. Then, its {\em cross ratio}
$\langle t_1,t_2,t_3,t_4\rangle$ is the non-zero real number defined by
$$
\langle t_1,t_2,t_3,t_4\rangle := \frac{(t_3 - t_1)(t_4 - t_2)}{(t_3 - t_2)(t_4 - t_1)}\,,
$$
where one uses the usual conventions if one of the $t_i$ equals $\infty$.

\begin{fact}\label{crkn4gen}
For a set $\varLambda\subset\R^2$, the
cross ratio of slopes of four pairwise non-parallel
$\varLambda$-directions is an element of the field $\mathbbm{k}_{\varLambda}$.
\end{fact}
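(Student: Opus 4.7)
The plan is to argue that every slope of a $\varLambda$-direction already lies in $\mathbbm{k}_{\varLambda}\cup\{\infty\}$, after which the assertion reduces to the observation that the cross ratio is a rational expression in the slopes whose denominator is nonzero by pairwise non-parallelism.

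First, I would translate ``$\varLambda$-direction'' back to the vector picture. A direction $u\in\mathbb{S}^{1}$ is a $\varLambda$-direction exactly when $u$ is parallel to some nonzero $z\in\varLambda-\varLambda$; identifying $\R^{2}$ with $\C$, write $z=a+bi$ with $a,b\in\R$. Since $\varLambda-\varLambda\subset\mathbbm{K}_{\varLambda}$ and $\overline{\varLambda-\varLambda}\subset\mathbbm{K}_{\varLambda}$ by the definition of $\mathbbm{K}_{\varLambda}$, both $z$ and $\bar z$ lie in $\mathbbm{K}_{\varLambda}$, so
\[
a=\frac{z+\bar z}{2}\in\mathbbm{K}_{\varLambda}\cap\R=\mathbbm{k}_{\varLambda},\qquad b=\frac{z-\bar z}{2i}\in\mathbbm{K}_{\varLambda}\cap\R=\mathbbm{k}_{\varLambda}.
\]
Consequently, the slope of $u$, which coincides with the slope of $z$, is either $b/a\in\mathbbm{k}_{\varLambda}$ (when $a\neq 0$) or $\infty$ (when $a=0$). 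Scale-invariance of the slope means that the norming factor $\|z\|$, which need not lie in $\mathbbm{k}_{\varLambda}$, plays no role.

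Now let $u_{1},\dots,u_{4}$ be pairwise non-parallel $\varLambda$-directions with slopes $t_{1},\dots,t_{4}\in\mathbbm{k}_{\varLambda}\cup\{\infty\}$. Non-parallelism guarantees that the $t_{i}$ are pairwise distinct, so all denominators appearing in $\langle t_{1},t_{2},t_{3},t_{4}\rangle$ are nonzero. If all $t_{i}$ are finite, the cross ratio is a rational function in the $t_{i}$ with rational coefficients and nonzero denominator, hence lies in $\mathbbm{k}_{\varLambda}$. If one of the $t_{i}$ equals $\infty$, the usual convention is the limit of the above formula as $t_{i}\to\infty$, which cancels the two factors containing that $t_{i}$ and leaves a rational expression in the three finite slopes, again lying in $\mathbbm{k}_{\varLambda}$.

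There is no real obstacle here; the only point that requires a moment of care is the $\infty$-convention, but this is handled uniformly by passing to the limit in the defining formula. The heart of the argument is the first step, namely that for an algebraic Delone set (or even any $\varLambda\subset\C$) the real and imaginary parts of interpoint vectors automatically lie in $\mathbbm{k}_{\varLambda}$, which in turn rests on the symmetric definition of $\mathbbm{K}_{\varLambda}$ involving both $\varLambda-\varLambda$ and its complex conjugate.
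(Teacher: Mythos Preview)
Your argument contains a genuine gap: the claim that each individual slope lies in $\mathbbm{k}_{\varLambda}\cup\{\infty\}$ is false in general. The problematic step is
\[
b=\frac{z-\bar z}{2i}\in\mathbbm{K}_{\varLambda}\cap\R=\mathbbm{k}_{\varLambda},
\]
which tacitly uses $i\in\mathbbm{K}_{\varLambda}$. Nothing in the definition of $\mathbbm{K}_{\varLambda}$ guarantees this. For a concrete counterexample take the triangular lattice $\varLambda=\Z[\zeta_{3}]$: here $\mathbbm{K}_{\varLambda}=\Q(\zeta_{3})$, which does \emph{not} contain $i$, and $\mathbbm{k}_{\varLambda}=\Q(\zeta_{3})\cap\R=\Q$. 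A nonzero interpoint vector $m+n\zeta_{3}$ has slope $n\sqrt{3}/(2m-n)$, which is an irrational multiple of $\sqrt{3}$ whenever $n\neq 0$, hence \emph{not} in $\mathbbm{k}_{\varLambda}=\Q$. So your first step already fails for one of the simplest examples covered by the statement.

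The Fact is nevertheless true, and the repair is small. Writing $z_{j}=a_{j}+b_{j}i\in\varLambda-\varLambda$, the cross ratio of slopes simplifies (the $a_{j}$'s cancel) to
\[
\langle t_{1},t_{2},t_{3},t_{4}\rangle
=\frac{(a_{1}b_{3}-a_{3}b_{1})(a_{2}b_{4}-a_{4}b_{2})}{(a_{2}b_{3}-a_{3}b_{2})(a_{1}b_{4}-a_{4}b_{1})}.
\]
Now $2i\,(a_{j}b_{k}-a_{k}b_{j})=\bar z_{j}z_{k}-z_{j}\bar z_{k}\in\mathbbm{K}_{\varLambda}$ for each pair $(j,k)$, so each \emph{product of two} such determinants lies in $-\tfrac{1}{4}\mathbbm{K}_{\varLambda}\cdot\mathbbm{K}_{\varLambda}\subset\mathbbm{K}_{\varLambda}$ (the factors $i$ occur in pairs and give $i^{2}=-1$), and is visibly real, hence lies in $\mathbbm{k}_{\varLambda}$. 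The cross ratio is then a quotient of two nonzero elements of $\mathbbm{k}_{\varLambda}$; the case where one slope is $\infty$ is handled the same way after dropping the two factors it enters. The paper states the result without proof, but this is the computation that underlies it; your approach would only go through under the additional hypothesis $i\in\mathbbm{K}_{\varLambda}$.
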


The proof of the following central result uses Darboux's theorem on second 
midpoint polygons; see~\cite{D}, ~\cite{GM} or~\cite[Ch.~1]{G}.

\begin{theorem}\label{finitesetncr0gen}
Let $\varLambda\subset\R^2$, let $U$ be a set of four or more pairwise
non-parallel $\varLambda$-directions, and suppose the existence of a
$U$-polygon. Then the
cross ratio of slopes of any four directions of $U$, arranged in order
of increasing angle with the positive real axis, is an element of the set
\begin{equation}\label{setkl}
\Big(\bigcup_{m\geq 4}\Big ( \bigcup_{\substack{k_3<k_1\leq k_2<k_4\leq m-1\\k_1+k_2=k_3+k_4}} \frac{(1-\zeta_{m}^{k_1})(1-\zeta_{m}^{k_2})}{(1-\zeta_{m}^{k_3})(1-\zeta_{m}^{k_4})}\Big )\Big )\Big )\,\,\cap\,\,
\mathbbm{k}_{\varLambda}\,,\end{equation}
where $\zeta_m:=e^{2\pi i/m}$, a primitive $m$th root of unity in
$\C$.
\end{theorem}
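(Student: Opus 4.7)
The plan is to use Darboux's theorem to reduce to an affinely regular $U$-polygon and then compute the cross ratio of four $U$-directions explicitly as a ratio of differences of $m$th roots of unity. First I would invoke Darboux's theorem on second midpoint polygons, exactly as cited in the paper via~\cite{D}, \cite{GM}, and~\cite{G}, to produce from the hypothesized $U$-polygon an affinely regular $U$-polygon $Q$ of some order $m\geq 4$. Since the cross ratio of four directions in $\mathbb{S}^{1}$ is the cross ratio of the four corresponding points on the line at infinity in $\R P^{2}$, and affinities of $\R^{2}$ act projectively on this line and thus preserve cross ratios of directions, I may apply a suitable affine transformation to $Q$ and assume without loss of generality that $Q$ is the standard regular $m$-gon with vertex set $\{\zeta_{m}^{k} : 0\leq k\leq m-1\}$.

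In this normalization, a chord of $Q$ joining the vertices $\zeta_{m}^{a}$ and $\zeta_{m}^{b}$ factors as $\zeta_{m}^{b}-\zeta_{m}^{a} = -2i\sin(\pi(b-a)/m)\,\zeta_{2m}^{a+b}$, so (up to a real scalar) its direction depends only on $a+b$ modulo $m$. Consequently the four prescribed $U$-directions, once ordered by increasing angle with the positive real axis, correspond to four integers $\ell_{1}<\ell_{2}<\ell_{3}<\ell_{4}$ in $\{0,1,\ldots,m-1\}$. Using the identity $\tan\alpha-\tan\beta = \sin(\alpha-\beta)/(\cos\alpha\cos\beta)$, the cosine factors cancel and the cross ratio of the four slopes reduces to a quotient of four sines of the form $\sin(\pi(\ell_{i}-\ell_{j})/m)$. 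Substituting $1-\zeta_{m}^{k} = -2i\sin(\pi k/m)\,\zeta_{2m}^{k}$ and exploiting the balance $(\ell_{3}-\ell_{1})+(\ell_{4}-\ell_{2}) = (\ell_{3}-\ell_{2})+(\ell_{4}-\ell_{1})$ to cancel the $\zeta_{2m}$-prefactors, the cross ratio rewrites as $(1-\zeta_{m}^{k_{1}})(1-\zeta_{m}^{k_{2}})\bigl/\bigl((1-\zeta_{m}^{k_{3}})(1-\zeta_{m}^{k_{4}})\bigr)$, where $k_{3}:=\ell_{3}-\ell_{2}$, $k_{4}:=\ell_{4}-\ell_{1}$, and $\{k_{1},k_{2}\} = \{\ell_{3}-\ell_{1},\ell_{4}-\ell_{2}\}$ ordered so that $k_{1}\leq k_{2}$.

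A direct inequality check from $\ell_{1}<\ell_{2}<\ell_{3}<\ell_{4}\leq m-1$ then yields the required $k_{3}<k_{1}\leq k_{2}<k_{4}\leq m-1$ together with $k_{1}+k_{2}=k_{3}+k_{4}$, so the cross ratio belongs to the union appearing in~(\ref{setkl}); combined with Fact~\ref{crkn4gen}, which already places this real number in $\mathbbm{k}_{\varLambda}$, this completes the proof. I expect the main obstacle to be the Darboux reduction itself: one must argue carefully that the $U$-polygon property persists under the iterated midpoint construction and survives the passage to the affinely regular limit, so that every direction in $U$ indeed appears as a chord direction of the limiting affinely regular polygon. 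The subsequent cyclotomic manipulation is then essentially routine bookkeeping once this normalization is in place.
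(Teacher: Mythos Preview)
Your proposal is correct and follows essentially the same approach as the paper, which simply defers to the first part of the proof of~\cite[Theorem~4.5]{GG} (the Darboux reduction to an affinely regular polygon followed by the cyclotomic computation of the cross ratio) together with the additional invocation of Fact~\ref{crkn4gen} to place the result in $\mathbbm{k}_{\varLambda}$. Your sketch in fact unpacks more of the Gardner--Gritzmann argument than the paper does; the only point requiring care, as you rightly flag, is verifying that the $U$-polygon property (hence the set of admissible chord directions) survives the iterated midpoint construction and passage to the affinely regular limit, which is handled in~\cite{GG}.
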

\begin{proof}
The assertion follows from the same arguments as in the first part of the proof
of~\cite[Theorem 4.5]{GG}. Here, one additionally has to employ Fact~\ref{crkn4gen}.\end{proof}

In order to find a sufficient condition for the determination of the convex subsets of an algebraic Delone set $\varLambda$ by
$X$-rays in four pairwise non-parallel $\varLambda$-directions, it is
essential for our approach to gain some insight into the set~\eqref{setkl}. This is done in
the next section. 

\subsection{A cyclotomic theorem}\label{sec9}

We need the following facts from the theory of algebraic number fields. Let $\mathbbm{K}/\mathbbm{k}$ be an extension of
algebraic number fields (i.e., finite extensions of $\LQ$) of degree $d:=[\mathbbm{K}:\mathbbm{k}]$
$\in\N$. Since $\mathbbm{K}/\mathbbm{k}$ is separable, the
corresponding norm $N_{\mathbbm{K}/\mathbbm{k}}\!:\,
\mathbbm{K}\rightarrow \mathbbm{k}$ is given by
$$
N_{\mathbbm{K}/\mathbbm{k}}(\kappa)=\prod_{j=1}^{d}\sigma_{j}(\kappa)\,,
$$
where the $\sigma_{j}$ are the $d$ distinct embeddings of
$\mathbbm{K}/\mathbbm{k}$ into $\C/\mathbbm{k}$; compare~\cite[Algebraic Supplement, Sec.~2,
Corollary 1]{Bo}. In particular, one has $N_{\mathbbm{K}/\mathbbm{k}}(\kappa)=\kappa^d$ for any $\kappa \in \mathbbm{k}$. The norm $N_{\mathbbm{K}/\mathbbm{k}}$ is multiplicative, i.e., for any $\kappa,\lambda\in\mathbbm{K}$, one has
\begin{equation}\label{normmult}
N_{\mathbbm{K}/\mathbbm{k}}(\kappa\lambda)=N_{\mathbbm{K}/\mathbbm{k}}(\kappa) N_{\mathbbm{K}/\mathbbm{k}}(\lambda)\,.
\end{equation}
In particular, the norm $N_{\mathbbm{K}/\mathbbm{k}}$ induces a homomorphism from the multiplicative subgroup $\mathbbm{K}^{\times}$ of $\mathbbm{K}$ to the multiplicative subgroup $\mathbbm{k}^{\times}$ of $\mathbbm{k}$. Moreover, the norm is transitive in the following sense. If $\mathbbm{L}$ is any intermediate field of $\mathbbm{K}/\mathbbm{k}$
above, then one has
\begin{equation}\label{normtr}
N_{\mathbbm{K}/\mathbbm{k}}=N_{\mathbbm{L}/\mathbbm{k}}\circ N_{\mathbbm{K}/\mathbbm{L}}\,.
\end{equation}

We also need the following facts from the theory of
$p$-adic valuations; compare~\cite{Gou,ko} for details. Let $p\in \mathbbm{P}$. The $p$-adic valuation on $\Z$ is the
function $v_{p}$, defined by $v_{p}(0):=\infty$ together with the equation
$$
n=p^{v_{p}(n)}n'
$$
for $n\neq 0$, where $p$ does not divide $n'$; that is, $v_{p}(n)$ is
the exponent of the highest power of $p$ that divides $n$. The function
$v_{p}$ is extended to $\Q$ by defining
$$
v_{p}\Big(\frac{a}{b}\Big):= v_{p}(a)-v_{p}(b)
$$
for $a,b\in\Z\setminus\{0\}$. Note that $v_{p}$
is $\Z$-valued on $\Q\setminus\{0\}$. Further, $v_{p}$ can be extended to the algebraic closure $\Q_{p}^{\operatorname{alg}}$
of the field $\Q_{p}$ of $p$-{\em adic numbers},
containing $\Q$. Note that $\Q_{p}^{\operatorname{alg}}$ contains the algebraic
closure $\Q^{\operatorname{alg}}$ of $\Q$ and hence all algebraic numbers. On
$\Q_{p}^{\operatorname{alg}}\setminus\{0\}$, $v_{p}$ takes values in $\Q$, and
satisfies
\begin{eqnarray}
v_{p}(-x)&=&v_{p}(x)\,,\label{minuspreserve}\\
v_{p}(xy)&=&v_{p}(x)+v_{p}(y)\,,\label{log1}\\
v_{p}\Big(\frac{x}{y}\Big)&=&v_{p}(x)-v_{p}(y)\label{log2}
\end{eqnarray} 
and
\begin{eqnarray}
v_{p}(x+y)&\geq& \operatorname{min}\{v_{p}(x),v_{p}(y)\}\,.
\end{eqnarray}

For $n\in
\mathbbm{N}$, we always let $\zetan := e^{2\pi i/n}$, as a specific
choice for a
primitive $n$th root of unity in $\C$. Further, $\phi$ will always denote Euler's totient function, i.e., $$\phi(n) =
\operatorname{card}\left(\big\{k \in \mathbbm{N}\, |\,1 \leq k \leq n
  \textnormal{ and } \operatorname{gcd}(k,n)=1\big\}\right)\,.$$

\begin{fact}\cite[Proposition 3.6]{GG}\label{ppuou} 
Let $p\in \mathbbm{P}$ and let $r,s,t \in \mathbbm{N}$.  
If $r$ is not
a power of $p$ and $\operatorname{gcd}(r,s)=1$, one has 
\begin{equation}
v_{p}(1-\zeta_{r}^{s})=0\,.
\end{equation} 
Otherwise, if $\operatorname{gcd}(p,s)=1$, then
\begin{equation}
v_{p}(1-\zeta_{p^{t}}^{s})=\frac{1}{p^{t-1}(p-1)}\,.
\end{equation}
\end{fact}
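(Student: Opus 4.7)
The plan is to reduce both statements to a computation of the norm $N_{\Q(\zeta_r)/\Q}(1-\zeta_r^s)$ via the cyclotomic polynomial $\Phi_r$ and then to exploit the ramification behaviour of $p$ in $\Q(\zeta_r)$. First I would recall the classical evaluation $\Phi_r(1)=q$ when $r=q^a$ is a prime power, and $\Phi_r(1)=1$ when $r$ has at least two distinct prime factors. The hypothesis $\gcd(r,s)=1$ ensures that $\zeta_r^s$ is again a primitive $r$th root of unity, so the Galois group $\mathrm{Gal}(\Q(\zeta_r)/\Q)$ sweeps it through all primitive $r$th roots of unity, giving
\[
N_{\Q(\zeta_r)/\Q}(1-\zeta_r^s)\,=\,\prod_{\gcd(k,r)=1}(1-\zeta_r^k)\,=\,\Phi_r(1).
\]

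For the first assertion ($r$ not a power of $p$), both sub-cases---$r=q^a$ with a prime $q\ne p$, or $r$ with two or more distinct prime factors---yield $v_p(\Phi_r(1))=0$. Applying multiplicativity of $v_p$ to $N(1-\zeta_r^s)=\prod_\sigma \sigma(1-\zeta_r^s)$ gives
\[
\sum_{\gcd(k,r)=1}v_p(1-\zeta_r^k)\,=\,v_p(\Phi_r(1))\,=\,0.
\]
Each $1-\zeta_r^k$ is an algebraic integer, so every summand is non-negative; a sum of non-negative rationals equal to $0$ forces each summand, in particular $v_p(1-\zeta_r^s)$, to vanish.

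For the second assertion ($r=p^t$, $\gcd(p,s)=1$), one has $\Phi_{p^t}(1)=p$ and thus $v_p(N_{\Q(\zeta_{p^t})/\Q}(1-\zeta_{p^t}^s))=1$. The key extra ingredient is the classical fact that $p$ is totally ramified in $\Q(\zeta_{p^t})$ with ramification index $\phi(p^t)=p^{t-1}(p-1)$, the element $1-\zeta_{p^t}$ being a uniformiser of the unique prime above $p$. This forces a unique extension of $v_p$ to $\Q(\zeta_{p^t})$, so all $\phi(p^t)$ Galois conjugates of $1-\zeta_{p^t}^s$ share the same $v_p$-value and the norm identity degenerates to $\phi(p^t)\cdot v_p(1-\zeta_{p^t}^s)=1$, yielding the claimed value $1/(p^{t-1}(p-1))$. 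I expect the main obstacle to lie precisely in this last step: identifying the ramification at $p$ together with $1-\zeta_{p^t}$ as a uniformiser is the one genuinely non-elementary input, while the rest of the argument is bookkeeping with $\Phi_r(1)$ and the elementary properties of $v_p$.
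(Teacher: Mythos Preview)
The paper does not give a proof of this statement at all: it is stated as a \emph{Fact} and attributed to \cite[Proposition 3.6]{GG}, with no argument supplied. There is therefore nothing in the paper to compare your proposal against.

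That said, your outline is the standard proof and is correct. The norm identity $N_{\Q(\zeta_r)/\Q}(1-\zeta_r^s)=\Phi_r(1)$ together with the well-known evaluation of $\Phi_r(1)$ disposes of the first case immediately, since all summands $v_p(1-\zeta_r^k)$ are non-negative (each $1-\zeta_r^k$ is an algebraic integer, hence lies in the valuation ring of any extension of $v_p$) and their sum vanishes. For the prime-power case your use of total ramification is exactly right: the unique extension of $v_p$ to $\Q(\zeta_{p^t})$ forces $v_p\circ\sigma=v_p$ on that field for every $\sigma$ in the Galois group, so all $\phi(p^t)$ terms in the norm sum coincide. An equivalent and slightly more elementary way to phrase this last step is to note directly that $(1-\zeta_{p^t}^s)/(1-\zeta_{p^t})=1+\zeta_{p^t}+\cdots+\zeta_{p^t}^{s-1}$ is a unit in $\Z[\zeta_{p^t}]$ whenever $\gcd(s,p)=1$, so all the $1-\zeta_{p^t}^s$ share the same valuation; combined with the norm computation this gives the result without invoking ramification theory explicitly.
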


\begin{defi}
Let $k,m\in\mathbbm{N}$ and let $p\in\mathbbm{P}$. An $m$th root of unity $\zeta_{m}^{k}$ is called a $p${\em -power root
  of unity} if there is a $t\in\mathbbm{N}$ such that
$\frac{k}{m}=\frac{s}{p^t}$ for some $s\in\mathbbm{N}$
  with $\operatorname{gcd}(p,s)=1$. 
\end{defi}

Note that an $m$th root of unity $\zeta_{m}^{k}$ is a $p$-power root
  of unity if and only if it is a primitive $p^{t}$th root of unity
  for some $t\in\mathbbm{N}$. The following property is immediate.

\begin{fact}\label{ppuou2}
Let $k,t \in\mathbbm{N}$ and 
$p\in\mathbbm{P}$. Further, let $j,m \in\mathbbm{N}$ with
$\operatorname{gcd}(j,m)=1$. Then, 
$\zeta_{m}^{k}$ is a primitive $p^t$th root of unity
 if and only if $(\zeta_{m}^{j})^{k}$ is a
 primitive $p^t$th root of unity.
 \end{fact}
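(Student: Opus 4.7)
The plan is to reduce Fact~\ref{ppuou2} to the observation that an $m$th root of unity $\xi$ is a primitive $p^{t}$th root of unity precisely when its multiplicative order in the group $\mu_m$ of $m$th roots of unity equals $p^{t}$. Since $(\zeta_{m}^{j})^{k}=\zeta_{m}^{jk}$, it therefore suffices to verify that $\zeta_{m}^{k}$ and $\zeta_{m}^{jk}$ have the same order under the hypothesis $\gcd(j,m)=1$.

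First, I would invoke the standard formula that the order of $\zeta_{m}^{a}$ in $\mu_m$ equals $m/\gcd(m,a)$ for every $a\in\mathbbm{N}$. Next, I would establish the elementary number-theoretic identity $\gcd(m,jk)=\gcd(m,k)$ under the assumption $\gcd(j,m)=1$: any prime power $q$ dividing both $m$ and $jk$ is coprime to $j$ (since $\gcd(j,m)=1$) and so must divide $k$; conversely, every common divisor of $m$ and $k$ trivially divides $jk$. Combining these two facts yields that $\zeta_{m}^{k}$ and $\zeta_{m}^{jk}$ have the same order, so one is a primitive $p^{t}$th root of unity if and only if the other is.

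Equivalently, and perhaps more conceptually, one can argue via Galois theory. The hypothesis $\gcd(j,m)=1$ ensures that the assignment $\zeta_{m}\mapsto \zeta_{m}^{j}$ extends to an automorphism $\sigma_{j}\in\operatorname{Gal}(\mathbbm{Q}(\zeta_{m})/\mathbbm{Q})$. Since any field automorphism preserves the multiplicative order of a torsion element, $\zeta_{m}^{k}$ has order $p^{t}$ if and only if $\sigma_{j}(\zeta_{m}^{k})=\zeta_{m}^{jk}=(\zeta_{m}^{j})^{k}$ does, which is exactly the claim.

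There is essentially no serious obstacle; the only thing to be careful about is not to be misled by the notation $(\zeta_{m}^{j})^{k}$, whose exponent should be understood modulo $m$, so that the primitivity question genuinely reduces to computing $\gcd(m,jk)$ rather than to any deeper structural assertion.
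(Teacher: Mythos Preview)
Your argument is correct. In fact, the paper gives no proof at all for this Fact, merely noting that ``the following property is immediate''; your two approaches (via the order formula $m/\gcd(m,a)$ and via the Galois automorphism $\sigma_j$) both make this immediacy explicit, with the Galois-theoretic version being closest in spirit to how the result is subsequently used in Lemma~\ref{sigmapreserve0}.
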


\begin{fact}[Gau\ss]\cite[Theorem 2.5]{Wa}\label{gau}
The $n$th cyclotomic field $\Q(\zetan)$ is of degree $[\Q(\zeta_n) :
  \mathbbm{Q}] = \phi(n)$ over $\LQ$. The field extension $\Q(\zeta_n)/ \mathbbm{Q}$
is a Galois extension with Abelian Galois group $G(\Q(\zeta_n)/
\mathbbm{Q}) \simeq (\Z / n\Z)^{\times}$,
where $a\, (\textnormal{mod}\, n)$ with $\operatorname{gcd}(a,n)=1$ corresponds to the automorphism given by\/ $\zetan \mapsto \zetan^{a}$.
\end{fact}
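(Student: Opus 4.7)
The plan is to derive both claims from irreducibility of the $n$th cyclotomic polynomial $\Phi_n(x):=\prod_{1\leq a\leq n,\,\gcd(a,n)=1}(x-\zeta_n^a)$. First I would observe that $\zeta_n$ is a root of $\Phi_n$ and that, by the factorisation $x^n-1=\prod_{d\mid n}\Phi_d(x)$ together with induction on $n$ and division with remainder, one has $\Phi_n(x)\in\Z[x]$ with $\deg\Phi_n=\phi(n)$. Consequently $[\Q(\zeta_n):\Q]\leq \phi(n)$, and the sharp equality will follow at once from showing that $\Phi_n$ is the minimal polynomial of $\zeta_n$ over $\Q$, i.e., that $\Phi_n$ is irreducible in $\Q[x]$ (equivalently, by Gauss's lemma, in $\Z[x]$).

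The core step — and the main obstacle — is this irreducibility. I would use the standard mod-$p$ argument: let $f(x)\in\Z[x]$ be the minimal polynomial of $\zeta_n$ over $\Q$ (taken monic); since $f\mid x^n-1$ in $\Z[x]$, write $x^n-1=f(x)g(x)$ with $g\in\Z[x]$. It suffices to show that $\zeta_n^p$ is a root of $f$ for every prime $p\nmid n$, for then every primitive $n$th root of unity $\zeta_n^a$ ($\gcd(a,n)=1$) is a root of $f$, forcing $f=\Phi_n$. If this failed, then $\zeta_n^p$ would be a root of $g$, so $\zeta_n$ would be a root of $g(x^p)$, hence $f(x)\mid g(x^p)$ in $\Z[x]$. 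Reducing modulo $p$ and using the Frobenius identity $g(x^p)\equiv g(x)^p\pmod p$, one gets $\bar f(x)\mid \bar g(x)^p$ in $\mathbb{F}_p[x]$, so $\bar f$ and $\bar g$ share an irreducible factor in $\mathbb{F}_p[x]$; this means $x^n-\bar 1=\bar f(x)\bar g(x)$ has a multiple root in $\overline{\mathbb{F}_p}$, contradicting $\gcd(x^n-1,nx^{n-1})=1$ in $\mathbb{F}_p[x]$ (as $p\nmid n$). This contradiction establishes irreducibility and hence $[\Q(\zeta_n):\Q]=\phi(n)$.

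For the Galois-theoretic part, I would argue as follows. Since $\Q(\zeta_n)$ is the splitting field of $x^n-1\in\Q[x]$ (all roots are powers of $\zeta_n$) and $\operatorname{char}\Q=0$, the extension $\Q(\zeta_n)/\Q$ is Galois. Any $\sigma\in G(\Q(\zeta_n)/\Q)$ must send $\zeta_n$ to another root of its minimal polynomial $\Phi_n$, i.e., to some $\zeta_n^{a(\sigma)}$ with $\gcd(a(\sigma),n)=1$, and $\sigma$ is determined by this image. The assignment $\sigma\mapsto a(\sigma)\,(\textnormal{mod}\,n)$ is a well-defined group homomorphism $G(\Q(\zeta_n)/\Q)\to(\Z/n\Z)^{\times}$ (composition of automorphisms corresponds to multiplication of exponents modulo $n$), and it is injective by the preceding sentence. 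Since both groups have order $\phi(n)$, it is an isomorphism, so the Galois group is Abelian with the claimed description. Apart from establishing the irreducibility of $\Phi_n$, every remaining step is routine.
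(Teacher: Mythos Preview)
Your argument is correct and is precisely the classical Dedekind proof of the irreducibility of $\Phi_n$ together with the standard derivation of the Galois group; there are no gaps. Note, however, that the paper does not supply its own proof of this statement: it is recorded as a \emph{Fact} with a direct citation to Washington~\cite[Theorem~2.5]{Wa}, so there is nothing in the paper to compare your approach against beyond observing that your proof is essentially the one given in that reference.
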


\begin{lem}\label{sigmapreserve0}
Let $m,k\in\mathbbm{N}$ and let $p\in\mathbbm{P}$. If $\sigma\in G(\LQ(\zeta_m)/\Q)$, then
$$
v_{p}(1-\zeta_{m}^{k})=v_{p}\big(\sigma(1-\zeta_{m}^{k})\big)\,.
$$
\end{lem}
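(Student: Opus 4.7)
The plan is to translate the Galois action into exponent arithmetic and then reduce everything to the primitive-root case that Fact~\ref{ppuou} handles directly.

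First I would invoke Fact~\ref{gau} to write $\sigma$ as $\zeta_m \mapsto \zeta_m^a$ for some $a\in\N$ with $\gcd(a,m)=1$, so that
\[
\sigma(1-\zeta_m^k)\,=\,1-\zeta_m^{ak}\,.
\]
The case $m\mid k$ is trivial since both sides of the claimed equality are then $v_p(0)=\infty$, so I may assume $\zeta_m^k\ne 1$. The key reduction is then to pass from $\zeta_m^k$ to its representation as a primitive root of unity: set $d:=\gcd(k,m)$, write $k=dk'$ and $m=dm'$ with $\gcd(k',m')=1$, and observe that $\zeta_m^k=\zeta_{m'}^{k'}$ is a primitive $m'$th root of unity. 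Since $\gcd(a,m)=1$ and $m'\mid m$, one has $\gcd(a,m')=1$, so $\gcd(ak',m')=1$ as well; hence $\zeta_m^{ak}=\zeta_{m'}^{ak'}$ is also a primitive $m'$th root of unity.

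Having reduced to the primitive case, I would simply apply Fact~\ref{ppuou} to each side: if $m'$ is not a prime power of $p$, then both valuations equal $0$; if $m'=p^t$ for some $t\in\N$, then both valuations equal $1/(p^{t-1}(p-1))$, because the expression in Fact~\ref{ppuou} depends only on $m'$ and on the coprimality of the exponent with $p$, which holds simultaneously for $k'$ and $ak'$ (this last compatibility is exactly Fact~\ref{ppuou2} in disguise). In either case,
\[
v_p(1-\zeta_m^k)\,=\,v_p(1-\zeta_m^{ak})\,=\,v_p\bigl(\sigma(1-\zeta_m^k)\bigr)\,,
\]
which is the assertion.

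No genuine obstacle is expected: the only thing one has to be careful about is the bookkeeping between $\zeta_m^k$ and its representation as a primitive $m'$th root of unity, and keeping track that the coprimality $\gcd(a,m)=1$ is preserved when passing from $m$ to the divisor $m'$. Once this is in place, the computation is mechanical and the result is essentially a direct consequence of Facts~\ref{ppuou} and~\ref{ppuou2}.
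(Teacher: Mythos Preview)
Your proof is correct and follows essentially the same approach as the paper's own proof, which also invokes Fact~\ref{gau} to write $\sigma$ as $\zeta_m\mapsto\zeta_m^j$ with $\gcd(j,m)=1$ and then appeals to Facts~\ref{ppuou} and~\ref{ppuou2}. You have simply made explicit the reduction to the primitive $m'$th root of unity that the paper leaves implicit.
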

\begin{proof}
By Fact~\ref{gau}, $\sigma$ is given by $\zeta_{m}\mapsto
\zeta_{m}^{j}$, where $j\in\mathbbm{N}$ satisfies $\operatorname{gcd}(j,m)=1$. The
assertion follows immediately from Fact~\ref{ppuou} in
conjunction with Fact~\ref{ppuou2}.
\end{proof}

\begin{defi}\label{fmddefi}
Let $m\geq 4$ be a natural number. We define 
$$
D'_{m}:=\left\{\left.(k_1,k_2,k_3,k_4)\in \mathbbm{N}^4 \right|
k_1,k_2,k_3,k_4\leq m-1 \mbox{ and } k_1+k_2=k_3+k_4\right\}\,,
$$
together with its subset
$$
D_{m}:=\left\{\left.(k_1,k_2,k_3,k_4)\in \mathbbm{N}^4 \right| k_3<k_1\leq
k_2<k_4\leq m-1 \mbox{ and } k_1+k_2=k_3+k_4\right\}\,,
$$
and define the function $f_{m}\,:\, D'_{m}\rightarrow \Q(\zeta_m)^{\times}$ by
\begin{equation}\label{fmd}
f_{m}\big((k_1,k_2,k_3,k_4)\big):=\frac{(1-\zeta_{m}^{k_1})(1-\zeta_{m}^{k_2})}{(1-\zeta_{m}^{k_3})(1-\zeta_{m}^{k_4})}\,.
\end{equation}
\end{defi}

In fact, the function $f_m$ is real-valued; see the proof of~\cite[Lemma
3.1]{GG}. Together with Fact~\ref{gau}, this 
shows that, for any $d\in D'_{m}$, the field $\Q(f_m(d))$ is a real
algebraic number field. Without further mention, we shall use this
fact in the following. 

\begin{lem}\label{sigmapreserve}
Let $m\geq 4$, let $p\in \mathbbm{P}$, and let $d\in
D'_{m}$. If $\sigma\in G(\LQ(\zeta_m)/\Q)$, then
$$
v_{p}\big(f_{m}(d)\big)=v_{p}\big(\sigma\big(f_{m}(d)\big)\big)\,.
$$
\end{lem}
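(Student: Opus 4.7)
The plan is to reduce the statement to the term-wise invariance already established in Lemma~\ref{sigmapreserve0} by exploiting the fact that $\sigma$ is a field automorphism together with the multiplicativity of $v_p$.

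First I would expand $f_m(d)$ according to Definition~\ref{fmddefi} and apply the logarithmic properties~\eqref{log1} and~\eqref{log2} of the $p$-adic valuation to write
\begin{equation*}
v_p\bigl(f_m(d)\bigr) \,=\, v_p(1-\zeta_m^{k_1}) + v_p(1-\zeta_m^{k_2}) - v_p(1-\zeta_m^{k_3}) - v_p(1-\zeta_m^{k_4}).
\end{equation*}
Since $\sigma\in G(\mathbb{Q}(\zeta_m)/\mathbb{Q})$ is a ring homomorphism, it commutes with products and quotients, so
\begin{equation*}
\sigma\bigl(f_m(d)\bigr) \,=\, \frac{\sigma(1-\zeta_m^{k_1})\,\sigma(1-\zeta_m^{k_2})}{\sigma(1-\zeta_m^{k_3})\,\sigma(1-\zeta_m^{k_4})},
\end{equation*}
and the same logarithmic properties of $v_p$ give the analogous decomposition for $v_p(\sigma(f_m(d)))$ as a sum and difference of the four quantities $v_p(\sigma(1-\zeta_m^{k_i}))$.

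Finally, Lemma~\ref{sigmapreserve0} supplies the equality $v_p(1-\zeta_m^{k_i}) = v_p(\sigma(1-\zeta_m^{k_i}))$ for each $i\in\{1,2,3,4\}$, and adding and subtracting these identities yields the desired conclusion. There is no real obstacle here: the proof is a direct combination of Lemma~\ref{sigmapreserve0} with the standard properties of $v_p$, noting only that all four factors $1-\zeta_m^{k_i}$ are non-zero (as $k_i\leq m-1$), so that dividing is legitimate and $f_m(d)\in\mathbb{Q}(\zeta_m)^\times$ as stated in Definition~\ref{fmddefi}.
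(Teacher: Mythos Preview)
Your proof is correct and follows exactly the same approach as the paper: apply Lemma~\ref{sigmapreserve0} to each factor $1-\zeta_m^{k_i}$ and use the logarithmic properties~\eqref{log1} and~\eqref{log2} of $v_p$ to assemble the result. You have simply written out in detail what the paper states in one line.
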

\begin{proof}
The
assertion follows from Lemma~\ref{sigmapreserve0} together with~(\ref{log1})
and~(\ref{log2}).
\end{proof}

\begin{fact}\cite[Ch.~5.2, Theorem 2.8]{La}\label{extension}
Let $\sigma\!:\, \mathbbm{K}\rightarrow \mathbbm{K}'$ be an isomorphism of fields, let
$\mathbbm{E}$
be an algebraic extension of $\mathbbm{K}$, and let $\mathbbm{L}$ be an algebraically closed
extension of $\mathbbm{K}'$. Then, there exists a field homomorphism  $\sigma'\;:\;
\mathbbm{E}\rightarrow \mathbbm{L}$ which extends $\sigma$.
\end{fact}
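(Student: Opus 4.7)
The plan is to apply Zorn's lemma to the poset of partial extensions of $\sigma$. First I would form the collection $\mathcal{S}$ of all pairs $(\mathbbm{F},\tau)$ where $\mathbbm{F}$ is an intermediate field $\mathbbm{K}\subseteq \mathbbm{F}\subseteq \mathbbm{E}$ and $\tau\!:\,\mathbbm{F}\rightarrow \mathbbm{L}$ is a field homomorphism with $\tau|_{\mathbbm{K}}=\sigma$, ordered by declaring $(\mathbbm{F}_1,\tau_1)\leq (\mathbbm{F}_2,\tau_2)$ precisely when $\mathbbm{F}_1\subseteq \mathbbm{F}_2$ and $\tau_2|_{\mathbbm{F}_1}=\tau_1$. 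The poset $\mathcal{S}$ is non-empty, since $(\mathbbm{K},\sigma)\in\mathcal{S}$.

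Next I would verify the chain condition: for any chain $\{(\mathbbm{F}_i,\tau_i)\}_{i\in I}$ in $\mathcal{S}$, the union $\mathbbm{F}^{\cup}:=\bigcup_{i\in I}\mathbbm{F}_i$ is again a field, and the compatibility $\tau_j|_{\mathbbm{F}_i}=\tau_i$ (whenever $\mathbbm{F}_i\subseteq \mathbbm{F}_j$) allows one to glue the $\tau_i$ into a well-defined homomorphism $\tau^{\cup}\!:\,\mathbbm{F}^{\cup}\rightarrow \mathbbm{L}$ extending each $\tau_i$. This provides an upper bound in $\mathcal{S}$, so Zorn's lemma yields a maximal element $(\mathbbm{F}^{*},\sigma^{*})\in\mathcal{S}$. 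It then remains to show $\mathbbm{F}^{*}=\mathbbm{E}$, in which case $\sigma':=\sigma^{*}$ is the required extension.

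To establish $\mathbbm{F}^{*}=\mathbbm{E}$, I would argue by contradiction: assume $\mathbbm{F}^{*}\subsetneq \mathbbm{E}$ and pick $\alpha\in \mathbbm{E}\setminus \mathbbm{F}^{*}$. Since $\mathbbm{E}/\mathbbm{K}$, and hence $\mathbbm{E}/\mathbbm{F}^{*}$, is algebraic, $\alpha$ has a minimal polynomial $p(x)\in\mathbbm{F}^{*}[x]$. Applying $\sigma^{*}$ coefficient-wise yields $\sigma^{*}(p)(x)\in\mathbbm{L}[x]$, which, because $\mathbbm{L}$ is algebraically closed, has a root $\beta\in\mathbbm{L}$. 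Via the isomorphism $\mathbbm{F}^{*}(\alpha)\cong \mathbbm{F}^{*}[x]/(p(x))$, the pair consisting of $\sigma^{*}$ on $\mathbbm{F}^{*}$ and the assignment $\alpha\mapsto \beta$ induces a field homomorphism $\tilde{\sigma}\!:\,\mathbbm{F}^{*}(\alpha)\rightarrow \mathbbm{L}$ strictly extending $\sigma^{*}$, which contradicts the maximality of $(\mathbbm{F}^{*},\sigma^{*})$.

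The main obstacle --- really the only non-bookkeeping step --- is the construction of $\tilde{\sigma}$ just described: one must verify that sending $\alpha$ to any root $\beta\in\mathbbm{L}$ of $\sigma^{*}(p)$ yields a well-defined homomorphism out of $\mathbbm{F}^{*}(\alpha)$. This follows from the standard universal property of simple algebraic extensions applied to the isomorphism $\mathbbm{F}^{*}\rightarrow \sigma^{*}(\mathbbm{F}^{*})$ (under which $p$ corresponds to $\sigma^{*}(p)$) together with the chosen root $\beta$. The remaining ingredients --- chain unions being fields, compatibility of restrictions, and non-emptiness of $\mathcal{S}$ --- are entirely routine.
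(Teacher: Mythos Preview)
Your argument is correct and is essentially the standard Zorn's-lemma proof of this extension theorem. Note, however, that the paper does not supply its own proof of this statement: it is recorded as a Fact with a citation to Lang's \emph{Algebra} (Ch.~5.2, Theorem~2.8), and the proof you have written is precisely the one found there.
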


\begin{lem}\label{1dn}
Let $m\geq 4$ and let
$d\in D'_{m}$. Then, for any prime
factor $p\in\mathbbm{P}$ of the numerator of the field norm 
$N_{\Q(f_m(d))/\Q}(f_m(d))$, one has 
\begin{equation*}\label{vpn}
v_{p}\big(N_{\Q(f_{m}(d))/\Q}\big(f_{m}(d)\big)\big)=e\,v_{p}\big(f_{m}(d)\big)\in \mathbbm{N}\,,
\end{equation*}
where $e:=[\Q(f_m(d)):\Q]\in\N$ is the degree of $f_{m}(d)$ over $\Q$. 
\end{lem}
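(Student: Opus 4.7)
The plan is to combine the transitivity of the norm with the Galois-equivariance of $v_p$ established in Lemma~\ref{sigmapreserve}. The key observation is that $f_{m}(d)$ lives in the intermediate field $\Q(f_m(d))$ of $\Q(\zeta_m)/\Q$, so its norm down to $\Q$ can be computed in two stages via (\ref{normtr}), while the outer field $\Q(\zeta_m)/\Q$ is Galois by Fact~\ref{gau} and thus allows us to express $N_{\Q(\zeta_m)/\Q}$ as the product of all Galois conjugates.

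First I would set $e':=[\Q(\zeta_m):\Q(f_m(d))]\in\N$, so that $\phi(m)=ee'$ by Fact~\ref{gau} and the tower law. Since $f_m(d)\in\Q(f_m(d))$, the inner norm in the tower formula simplifies to $N_{\Q(\zeta_m)/\Q(f_m(d))}(f_m(d))=f_m(d)^{e'}$, and applying (\ref{normtr}) together with multiplicativity~(\ref{normmult}) yields
$$
N_{\Q(\zeta_m)/\Q}\bigl(f_{m}(d)\bigr)\;=\;N_{\Q(f_m(d))/\Q}\bigl(f_{m}(d)\bigr)^{e'}\,.
$$
On the other hand, since $\Q(\zeta_m)/\Q$ is Galois, the norm $N_{\Q(\zeta_m)/\Q}(f_{m}(d))$ equals the product $\prod_{\sigma\in G(\Q(\zeta_m)/\Q)}\sigma(f_m(d))$. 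Taking $v_p$, using (\ref{log1}) repeatedly, and invoking Lemma~\ref{sigmapreserve} to replace each $v_p(\sigma(f_m(d)))$ by $v_p(f_m(d))$, I obtain
$$
v_{p}\bigl(N_{\Q(\zeta_m)/\Q}(f_{m}(d))\bigr)\;=\;\phi(m)\,v_{p}(f_m(d))\;=\;ee'\,v_{p}(f_m(d))\,.
$$

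Comparing the two expressions for $v_p(N_{\Q(\zeta_m)/\Q}(f_m(d)))$ and dividing by $e'$ gives the identity $v_{p}(N_{\Q(f_{m}(d))/\Q}(f_{m}(d)))=e\,v_{p}(f_{m}(d))$. Finally, since $N_{\Q(f_m(d))/\Q}(f_m(d))\in\Q^{\times}$ and $p$ divides its numerator by hypothesis, we have $v_p$ of this norm at least $1$, hence lying in $\N$; this gives the last claim of the lemma.

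The most delicate step is the passage from the abstract transitivity identity to the Galois-product expression of $N_{\Q(\zeta_m)/\Q}$: one needs that $\Q(\zeta_m)/\Q$ really is Galois (Fact~\ref{gau}) so that every embedding of $\Q(\zeta_m)$ into $\Q_p^{\mathrm{alg}}$ that restricts to the identity on $\Q$ is given by some $\sigma\in G(\Q(\zeta_m)/\Q)$, which legitimates invoking Lemma~\ref{sigmapreserve} to see that all conjugates of $f_m(d)$ have the same $p$-adic valuation. Everything else is a routine manipulation with the multiplicativity rules (\ref{normmult}), (\ref{log1}) and (\ref{log2}).
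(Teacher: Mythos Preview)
Your argument is correct. Both your proof and the paper's rely on Lemma~\ref{sigmapreserve} as the key input, but you reach it by a slightly different route. The paper observes that $\Q(f_m(d))/\Q$ is itself Galois (as a subextension of the Abelian extension $\Q(\zeta_m)/\Q$), writes $N_{\Q(f_m(d))/\Q}(f_m(d))=\prod_{j=1}^{e}\sigma_j(f_m(d))$ directly, then uses Fact~\ref{extension} to lift each $\sigma_j\in G(\Q(f_m(d))/\Q)$ to some $\sigma_j'\in G(\Q(\zeta_m)/\Q)$ so that Lemma~\ref{sigmapreserve} applies termwise. You instead pass all the way up to $\Q(\zeta_m)$ via norm transitivity~(\ref{normtr}), exploit $N_{\Q(\zeta_m)/\Q(f_m(d))}(f_m(d))=f_m(d)^{e'}$, compute $v_p$ of the full cyclotomic norm using Lemma~\ref{sigmapreserve} over the whole Galois group, and then divide by $e'$. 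Your route has the minor advantage of avoiding both the separate observation that the intermediate extension is Galois and the appeal to Fact~\ref{extension}; the paper's route is perhaps more direct in that it never leaves the field $\Q(f_m(d))$ except to borrow automorphisms. Either way the substance is the same.
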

\begin{proof}
Consider the inclusion of fields $\Q\big(f_{m}(d)\big)\subset
\Q(\zeta_m)$. The norm 
$N_{\Q(f_{m}(d))/\Q}\!:\,\Q\big(f_{m}(d)\big)\rightarrow \Q$ of the Galois
extension $\Q(f_{m}(d))/\Q$ is given by
$$N_{\Q(f_{m}(d))/\Q}(q)=\prod_{j=1}^{e}\sigma_{j}(q)$$ for $q\in
\Q(f_{m}(d))$, where
$\{\sigma_{1},\dots,\sigma_{e}\}$ is the underlying set of the Galois group
$G(\Q(f_{m}(d))/\Q)$. Note that
the field extension $\Q(f_{m}(d))/\Q$ is indeed a Galois
extension since, by Fact~\ref{gau}, the Galois extension
  $\Q(\zeta_m)/\Q$ has an Abelian Galois group. Moreover,
 each field automorphism $\sigma_{j}\in G(\Q(f_{m}(d))/\Q)$ can be extended to
a field automorphism $\sigma'_{j}\in G(\Q(\zeta_m)/\Q)$ by 
Fact~\ref{extension}. It
follows that
$$N_{\Q(f_{m}(d))/\Q}\big(f_{m}(d)\big)=\prod_{j=1}^{e}\sigma'_{j}\big(f_{m}(d)\big)\,.$$ Using the $p$-adic valuation $v_{p}$ in conjunction with~(\ref{log1}) and Lemma~\ref{sigmapreserve}, the assertion follows. 
\end{proof}


The multiplicativity of norms immediately implies the following

\begin{fact}\label{3dn}
Let $m\geq 4$ and let
$d:=(k_1,k_2,k_3,k_4)\in D'_{m}$. Then, one has
$d':=(k_3,k_4,k_1,k_2)\in D'_{m}$ and $f_{m}(d')=1/f_{m}(d)$, whence $\Q(f_{m}(d))=\Q(f_{m}(d'))$. Further, one has the identity 
$$
N_{\Q(f_{m}(d))/\Q}\big(f_{m}(d')\big)=\big(N_{\Q(f_{m}(d))/\Q}\big(f_{m}(d)\big)\big)^{-1}\,.
$$
\end{fact}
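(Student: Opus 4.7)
The claim splits into three simple assertions, and the plan is to check each in turn, using only the definition of $D'_m$ and $f_m$ together with the multiplicativity of the norm stated in equation~(\ref{normmult}).

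First, I would verify that $d' := (k_3,k_4,k_1,k_2) \in D'_m$. The defining conditions for membership in $D'_m$ are that all four entries lie in $\{1,\dots,m-1\}$ and that the sum of the first two entries equals the sum of the last two. Since $d \in D'_m$, all of $k_1,k_2,k_3,k_4$ are already in $\{1,\dots,m-1\}$, and the equation $k_3 + k_4 = k_1 + k_2$ is just the hypothesis $k_1 + k_2 = k_3 + k_4$ read backwards, so $d' \in D'_m$.

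Next, I would compute $f_m(d')$ directly from~(\ref{fmd}): swapping the roles of the pairs $(k_1,k_2)$ and $(k_3,k_4)$ produces the reciprocal, so $f_m(d') = 1/f_m(d)$. This immediately forces $\mathbb{Q}(f_m(d')) = \mathbb{Q}(f_m(d))$, since any field containing a nonzero element also contains its inverse.

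Finally, for the norm identity, I would apply $N_{\mathbb{Q}(f_m(d))/\mathbb{Q}}$ to the product $f_m(d)\cdot f_m(d') = 1$. By the multiplicativity~(\ref{normmult}), this gives
$$
N_{\mathbb{Q}(f_m(d))/\mathbb{Q}}\bigl(f_m(d)\bigr)\cdot N_{\mathbb{Q}(f_m(d))/\mathbb{Q}}\bigl(f_m(d')\bigr) \;=\; N_{\mathbb{Q}(f_m(d))/\mathbb{Q}}(1) \;=\; 1,
$$
from which the stated identity follows. There is no real obstacle here; the only microscopic point to watch is that $f_m(d')$ genuinely lies in $\mathbb{Q}(f_m(d))$ so that the norm on the left-hand side of the displayed equation is well-defined, but this is already secured by the equality $\mathbb{Q}(f_m(d)) = \mathbb{Q}(f_m(d'))$ established in the preceding step.
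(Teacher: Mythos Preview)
Your proposal is correct and matches the paper's approach: the paper simply states that ``the multiplicativity of norms immediately implies'' this fact without giving any further proof, and your three-step verification (membership in $D'_m$, reciprocal relation from the definition of $f_m$, then multiplicativity~\eqref{normmult}) is exactly the intended unpacking of that remark.
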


The following main result of this section may be viewed as a generalization of the first
part of~\cite[Theorem 3.10]{GG}.  

\begin{theorem}\label{intersectk8general}
For any real algebraic number field $\kk$, the set  
$$
N_{\mathbbm{k}/\Q}\Big ( \big(\bigcup_{m\geq 4} f_{m}(D_{m})\big )\cap 
 \mathbbm{k}\Big ) 
$$
is a finite subset of $\Q$.
\end{theorem}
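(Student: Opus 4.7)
The plan is to use $p$-adic valuations to control the prime factorisation of $N_{\kk/\Q}(x)$ for an arbitrary $x = f_m(d) \in \kk$ with $m \ge 4$ and $d \in D_m$. Writing $n := [\kk:\Q]$ and $e := [\Q(x):\Q]$, the inclusion $\Q(x) \subseteq \kk$ gives $e \mid n$, and the transitivity of norms~\eqref{normtr} yields
\[
N_{\kk/\Q}(x) \;=\; N_{\Q(x)/\Q}\bigl(x^{n/e}\bigr) \;=\; N_{\Q(x)/\Q}(x)^{n/e},
\]
reducing the task to the control of $N_{\Q(x)/\Q}(x)$.

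The next step is the prime-by-prime identity $v_p(N_{\Q(x)/\Q}(x)) = e\,v_p(x)$ for \emph{every} rational prime $p$ (rather than only those dividing the numerator, as stated in Lemma~\ref{1dn}). This follows by the same reasoning as in the proof of that lemma: $\Q(x)/\Q$ is a subextension of the abelian Galois extension $\Q(\zeta_m)/\Q$ and is therefore itself Galois; each $\sigma \in G(\Q(x)/\Q)$ lifts via Fact~\ref{extension} to an element of $G(\Q(\zeta_m)/\Q)$, and Lemma~\ref{sigmapreserve} guarantees that such a lift preserves $v_p$. Expanding $N_{\Q(x)/\Q}(x)$ as a product over $G(\Q(x)/\Q)$ and applying~\eqref{log1} gives the identity. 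Combined with the first step, one obtains
\[
v_p\bigl(N_{\kk/\Q}(x)\bigr) \;=\; n\,v_p(x).
\]

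The heart of the argument is to bound $|v_p(x)|$ uniformly in $m$ and $d$. From~\eqref{fmd} together with~\eqref{log1} and~\eqref{log2},
\[
v_p(x) \;=\; v_p(1-\zeta_m^{k_1}) + v_p(1-\zeta_m^{k_2}) - v_p(1-\zeta_m^{k_3}) - v_p(1-\zeta_m^{k_4}).
\]
After rewriting each $\zeta_m^{k_i}$ as a primitive root of unity of order $m/\gcd(m,k_i)$, Fact~\ref{ppuou} shows that every summand lies in $\{0\} \cup \{1/(p^{t-1}(p-1)) : t \in \N\} \subseteq [0,\,1/(p-1)]$. Consequently $|v_p(x)| \le 2/(p-1)$ and hence $|v_p(N_{\kk/\Q}(x))| \le 2n/(p-1)$.

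To conclude, $N_{\kk/\Q}(x)$ is a nonzero rational, so each $v_p(N_{\kk/\Q}(x))$ is an integer. The bound above forces it to vanish whenever $p > 2n+1$, and for the finitely many primes $p \le 2n+1$ it ranges over a finite subset of $\Z$. This leaves only finitely many candidates for $|N_{\kk/\Q}(x)|$, and two for the sign, yielding the desired finiteness. I do not foresee a serious obstacle; the only delicate point is the slight strengthening of Lemma~\ref{1dn} to all primes $p$, which is immediate from its proof.
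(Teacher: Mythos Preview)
Your argument is correct and follows the same $p$-adic valuation strategy as the paper: both rest on the identity $v_p(N_{\Q(x)/\Q}(x)) = e\,v_p(x)$ (the paper's Lemma~\ref{1dn}, which you rightly observe holds for all primes $p$, not only those dividing the numerator) together with the bound on $v_p(1-\zeta_m^k)$ coming from Fact~\ref{ppuou}. The paper organises the endgame via a case split on whether $|N_{\Q(x)/\Q}(x)|$ is greater than, less than, or equal to $1$, handling the second case through the reciprocal trick of Fact~\ref{3dn} and then assembling the result as a union over divisors $e\mid [\kk:\Q]$; your direct uniform bound $|v_p(N_{\kk/\Q}(x))|\le 2n/(p-1)$ treats numerator and denominator primes simultaneously and so dispenses with that case analysis and with Fact~\ref{3dn} altogether. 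This is a genuine streamlining, but the core mechanism---Galois invariance of $v_p$ on the relevant cyclotomic elements (Lemma~\ref{sigmapreserve}) forcing the norm's $p$-adic valuation to be a bounded multiple of $v_p(x)$---is identical.
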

\begin{proof}
Set $f:=[\kk:\Q]\in\N$ and let $f_{m}(d)\in (\bigcup_{m\geq 4} f_{m}(D'_{m}))\cap 
 \mathbbm{k}$ for suitable $m\geq 4$ and  $d=(k_1,k_2,k_3,k_4)\in
D'_{m}$. Since $\Q(f_{m}(d))$ is an intermediate field of the field extension $\mathbbm{k}/\LQ$, $f_{m}(d)$ is of degree $e:=[\Q(f_{m}(d)):\Q]$ over $\Q$, where $e$
is a divisor of $f$. Since $f_{m}(d)\neq
0$,  
 the norm $N_{\Q(f_{m}(d))/\Q}(f_{m}(d))$ is non-zero, hence its absolute value is greater
 than zero. Suppose that the absolute value of
 $N_{\Q(f_{m}(d))/\Q}(f_{m}(d))$ is greater than $1$. Then the numerator of
$N_{\Q(f_{m}(d))/\Q}(f_{m}(d))$ has a prime factor, say
$p\in\mathbbm{P}$. Further, by Lemma~\ref{1dn}, for every such prime factor
$p\in\mathbbm{P}$, one has
\begin{equation}\label{restr17}
v_{p}\big(N_{\Q(f_{m}(d))/\Q}\big(f_{m}(d)\big)\big)= e\,v_{p}\big(f_{m}(d)\big)=e\,v_{p}\left ( \frac{(1-\zeta_{m}^{k_1})(1-\zeta_{m}^{k_2})}{(1-\zeta_{m}^{k_3})(1-\zeta_{m}^{k_4})}
\right )\in \mathbbm{N}\,.
\end{equation}
Applying~\eqref{log1}, \eqref{log2} and Fact~\ref{ppuou},
one sees that $v_{p}(f_{m}(d))$ is a sum of at most four terms of the
form $1/(p^{t'-1}(p-1))$ for various $t'\in\N$, with one or two
positive terms and at most two negative ones. Let $t$ be the smallest
$t'$ occurring in one of the positive terms. Then, \eqref{restr17} shows in particular that   
$
2e/(p^{t-1}(p-1))\geq 1
$
or, equivalently,
\begin{equation}\label{restr171}
p^{t-1}(p-1)\leq 2\,e\,.
\end{equation}
One can now see that, by~\eqref{restr17} and~\eqref{restr171} and the obvious fact that
$
p^{t-1}(p-1)\rightarrow \infty
$
for fixed $p\in\mathbbm{P}$ as $t\rightarrow\infty$ (resp., for
fixed $t\in\N$ as $\mathbbm{P}\owns p\rightarrow\infty$), there are
only a finite number of cases to deal with. In particular, one sees
that the numerator of $N_{\Q(f_{m}(d))/\Q}(f_{m}(d))$ can only attain
finitely many values. Since, by assumption, the absolute value of
$N_{\Q(f_{m}(d))/\Q}(f_{m}(d))$ is greater than $1$, the existence of
a finite set of non-zero rational numbers follows, say $N_{e}$,
such that $N_{\Q(f_{m}(d))/\Q}(f_{m}(d))\in N_{e}$. Moreover, it follows from Fact~\ref{3dn} that, if the absolute value of 
$N_{\Q(f_{m}(d))/\Q}(f_{m}(d))$ is smaller than one, then one has
$N_{\Q(f_{m}(d))/\Q}(f_{m}(d))\in(N_{e})^{-1}$, whereas the
missing case can only lead to the norms $\pm 1$. The transitivity of norms (cf.~\eqref{normtr}) in conjunction with the multiplicativity of norms (cf.~\eqref{normmult}) immediately gives $N_{\mathbbm{k}/\Q}(f_{m}(d))\in
(\{1,-1\}\cup N_{e}\cup(N_{e})^{-1})^{f/e}$. Since $D_{m}\subset D'_{m}$ and since the above analysis only depends on the degree $e$ of $f_m(d)$ over $\LQ$, one obtains   
$$
N_{\mathbbm{k}/\Q}\Big ( \big(\bigcup_{m\geq 4} f_{m}(D_{m})\big )\cap 
 \mathbbm{k}\Big ) 
\,\,\subset\,\,\bigcup_{e|f}\big(\{1,-1\}\cup
N_{e}\cup(N_{e})^{-1}\big)^{\frac{f}{e}}\,,
$$
for suitable finite subsets $N_e$ of $\LQ\setminus\{0\}$. The assertion follows. 
\end{proof}

\subsection{Results}

\begin{theorem}\label{finitesetncrsupergeneral}
For all algebraic Delone sets $\varLambda$, there is a finite set $N_{\varLambda}\subset \Q$ such that,
for all sets $U$ of four or more pairwise
non-parallel $\varLambda$-directions, one has the following: If there
exists a
$U$-polygon, then the
cross ratio of slopes of any four directions of $U$, arranged in order
of increasing angle with the positive real axis, maps under the norm
$N_{\mathbbm{k}_{\varLambda}/\Q}$ to $N_{\varLambda}$.
\end{theorem}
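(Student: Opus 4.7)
The plan is to observe that this statement is essentially an immediate consequence of the cross-ratio restriction coming from Theorem~\ref{finitesetncr0gen} combined with the finiteness statement in Theorem~\ref{intersectk8general}. The crucial conceptual point is that the field $\mathbbm{k}_{\varLambda}$ depends only on $\varLambda$, not on the chosen set $U$ of directions; hence once we fix $\varLambda$ we can extract a single finite set that serves uniformly for every $U$.

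First I would verify that $\mathbbm{k}_{\varLambda}$ is a real algebraic number field: since $\varLambda$ satisfies (Alg), we have $[\mathbbm{K}_{\varLambda}:\Q]<\infty$, so $\mathbbm{k}_{\varLambda}=\mathbbm{K}_{\varLambda}\cap\R$ is a subfield of a number field and thus itself a real algebraic number field (cf.~Remark~\ref{algdellem}). Consequently, Theorem~\ref{intersectk8general} applied to $\kk:=\mathbbm{k}_{\varLambda}$ tells us that
$$
N_{\varLambda}\,:=\,N_{\mathbbm{k}_{\varLambda}/\Q}\!\Big(\big(\textstyle\bigcup_{m\geq 4} f_{m}(D_{m})\big)\cap \mathbbm{k}_{\varLambda}\Big)
$$
is a finite subset of $\Q$. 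This is the set we take as our $N_{\varLambda}$.

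Next, let $U$ be any set of four or more pairwise non-parallel $\varLambda$-directions and suppose a $U$-polygon exists. Pick any four directions of $U$ arranged in order of increasing angle with the positive real axis, and let $q$ be the cross ratio of their slopes. By Fact~\ref{crkn4gen}, $q\in\mathbbm{k}_{\varLambda}$. Applying Theorem~\ref{finitesetncr0gen} (whose hypotheses are met, since a $U$-polygon exists and we have four pairwise non-parallel $\varLambda$-directions in the chosen order), we conclude that $q$ lies in the set~\eqref{setkl}, that is,
$$
q\,\in\,\big(\textstyle\bigcup_{m\geq 4} f_{m}(D_{m})\big)\cap \mathbbm{k}_{\varLambda}.
$$
Applying the norm $N_{\mathbbm{k}_{\varLambda}/\Q}$ to both sides then yields $N_{\mathbbm{k}_{\varLambda}/\Q}(q)\in N_{\varLambda}$, which is exactly the asserted conclusion.

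There is no serious obstacle here: the work has already been done in Theorems~\ref{finitesetncr0gen} and~\ref{intersectk8general}. The only subtlety is the order-of-quantifiers point noted at the start, namely that the finiteness of $N_{\varLambda}$ requires a single invocation of Theorem~\ref{intersectk8general} for the \emph{fixed} field $\mathbbm{k}_{\varLambda}$, after which the same $N_{\varLambda}$ automatically captures the cross ratios coming from every admissible $U$. The proof is thus essentially a two-line combination of the earlier results, and no new calculations are required.
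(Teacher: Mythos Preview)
Your proof is correct and follows essentially the same approach as the paper: the paper's own proof is a one-line appeal to Remark~\ref{algdellem} and Theorems~\ref{finitesetncr0gen} and~\ref{intersectk8general}, which is precisely what you unpack. Your explicit invocation of Fact~\ref{crkn4gen} is harmless but redundant, since the intersection with $\mathbbm{k}_{\varLambda}$ is already built into the conclusion of Theorem~\ref{finitesetncr0gen}.
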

\begin{proof}
The assertion is an immediate consequence of Remark~\ref{algdellem} and Theorems~\ref{finitesetncr0gen} and~\ref{intersectk8general}.
\end{proof}

Summing up, we can now state our main result. 

\begin{theorem}\label{main3gen}
Any algebraic Delone set $\varLambda$ has the following properties:
\begin{itemize}
\item[(a)]
There is a finite set $N_{\varLambda}\subset \Q$ such that,
 for all sets $U$ of four pairwise
non-parallel $\varLambda$-directions, one has the following: If $\,U$ has the property that the cross ratio of slopes of the
directions of $U$, arranged in order of increasing angle with the
positive real axis, does not map under the norm
$N_{\mathbbm{k}_{\varLambda}/\Q}$ to $N_{\varLambda}$, then the convex subsets of $\varLambda$ are determined by the $X$-rays in the directions of $U$.
\item[(b)]
For all sets $U$  of three or less pairwise
non-parallel $\varLambda$-directions, the convex subsets of $\varLambda$ are not determined by the $X$-rays in the
directions of $U$. 
\end{itemize}
\end{theorem}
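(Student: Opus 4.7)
The plan is to read off Theorem~\ref{main3gen} from the results already in hand. No new combinatorial, number-theoretic, or geometric input is needed; everything reduces to combining Proposition~\ref{characungen}, Lemma~\ref{uleq3} and Theorem~\ref{finitesetncrsupergeneral}.

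For part~(a), I would define $N_{\varLambda}\subset\Q$ to be the finite set supplied by Theorem~\ref{finitesetncrsupergeneral}; crucially, this set depends only on $\varLambda$ (in fact only on $[\mathbbm{k}_{\varLambda}:\Q]$) and not on the choice of $U$. Let $U$ now be a set of four pairwise non-parallel $\varLambda$-directions whose cross ratio of slopes, arranged in order of increasing angle with the positive real axis, is \emph{not} mapped into $N_{\varLambda}$ by $N_{\mathbbm{k}_{\varLambda}/\Q}$. The contrapositive of Theorem~\ref{finitesetncrsupergeneral} then excludes the existence of any $U$-polygon in $\varLambda$. Since $\operatorname{card}U=4\geq 2$, the equivalence (i)$\Leftrightarrow$(ii) in Proposition~\ref{characungen} yields determination of the convex subsets of $\varLambda$ by the $X$-rays in the directions of $U$.

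For part~(b), I would distinguish cases according to $\operatorname{card}U\in\{0,1,2,3\}$. For $\operatorname{card}U\in\{2,3\}$, Lemma~\ref{uleq3} supplies a $U$-polygon in $\varLambda$, and Proposition~\ref{characungen} immediately rules out determination. For $\operatorname{card}U=1$, say $U=\{u\}$, any two distinct $\varLambda$-points lying on a common line in direction $u$ yield two different singleton convex subsets of $\varLambda$ with identical $X$-ray in direction $u$; such a pair exists because $u$ is a $\varLambda$-direction. The case $\operatorname{card}U=0$ is trivial, since the $X$-ray condition is vacuous and $\varLambda$ contains many distinct convex subsets.

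I do not anticipate any serious obstacle: the main technical work (the valuation-theoretic control of cross ratios) has already been carried out in Section~\ref{sec9} and packaged into Theorem~\ref{finitesetncrsupergeneral}. The only point that deserves a moment's care is to ensure that $N_{\varLambda}$ is chosen \emph{once and for all} for the entire family of admissible $U$, which is precisely the form in which it is produced by Theorem~\ref{finitesetncrsupergeneral}.
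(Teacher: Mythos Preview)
Your proposal is correct and follows essentially the same route as the paper: part~(a) is obtained by combining Proposition~\ref{characungen} with Theorem~\ref{finitesetncrsupergeneral}, and part~(b) by combining Proposition~\ref{characungen} with Lemma~\ref{uleq3}. Your explicit treatment of the cases $\operatorname{card}U\in\{0,1\}$ is a little more careful than the paper's terse citation (since Proposition~\ref{characungen} formally assumes $\operatorname{card}U\geq 2$), but these cases are trivial and do not amount to a different approach.
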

\begin{proof}
Part (a) follows immediately from Proposition~\ref{characungen} in
conjunction with 
Theorem~\ref{finitesetncrsupergeneral}. Assertion (b) is an immediate consequence of Proposition~\ref{characungen} in conjunction with Lemma~\ref{uleq3}. 
\end{proof}

\subsection{Application to cyclotomic model sets and an open problem}\label{applopen}

Following Moody~\cite{Moody}, modified along the lines of the algebraic setting of Pleasants~\cite{PABP}, we
define as follows.

\begin{defi}\label{deficy}
Let $n\geq 3$ and let $.^{\star}\!:\,
  \Z[\zeta_n]\rightarrow (\R^2)^{\phi(n)/2-1}$ be any map of the form $z\mapsto
  (\sigma_{2}(z),\dots,\sigma_{\phi(n)/2}(z))$, where the
set $\{\sigma_{2},\dots,\sigma_{\phi(n)/2}\}$ arises from
$G(\Q(\zeta_n)/ \mathbbm{Q})\setminus\{\operatorname{id},\bar{.}\}$ by choosing exactly one automorphism
from each pair of complex conjugate ones
(cf.~Fact~\ref{gau}). Then, for any such choice, each translate
  $\varLambda$ of
$$
\varLambda(W):=\{z\in\Z[\zeta_n]\,|\,z^{\star}\in W\}\,,
$$  
where $W\subset
  (\R^2)^{\phi(n)/2-1}$ is any set with $\varnothing\,
  \neq\, \operatorname{int}W\subset W\subset \operatorname{cl}(\operatorname{int}W)$ and
  $\operatorname{cl}(\operatorname{int}W)$ compact, is a \emph{cyclotomic model set with underlying
  $\Z$-module $\Z[\zeta_n]$}. Moreover, $.^{\star}$ and $W$ are called the \emph{star
    map} and \emph{window} of $\varLambda$, respectively. 
\end{defi}

\begin{rem}\label{propms1} 
In
  Definition~\ref{deficy}, we use the convention that, for $\phi(n)=2$ (i.e., $n\in\{3,4,6\}$), $(\R^2)^{\phi(n)/2-1}$ is the trivial Abelian group $\{0\}$ and the star map is the zero
  map. Note that the ring $\Z[\zeta_n]$ is the ring of integers in
  the $n$th cyclotomic field and is a dense subset of the plane if $n\notin\{3,4,6\}$; cf.~\cite[Theorem
    2.6]{Wa}. Model sets $\varLambda\subset\R^d$ are Meyer sets and
  have many other fascinating 
long-range order properties; see~\cite{Moody,Schl2,Schl} for the
  general setting and details. Further, a cyclotomic model set $\varLambda$ with underlying
$\Z$-module $\Zn$ is aperiodic if and only if
$n\notin\{3,4,6\}$, i.e., the translates of the square
(resp., triangular) lattice are the only examples of cyclotomic model
 sets with non-zero translation symmetries; cf.~\cite{BG2,H0,H} for
 more details and properties of (cyclotomic) model sets.
\end{rem}

\begin{ex}\label{exab}
 For an (aperiodic) cyclotomic model set with
 underlying $\Z$-module $\Z[\zeta_8]$, consider
 $\varLambda_{\text{AB}} := \{z \in \Z[\zeta_8]\, | \,z^{\star} \in
 W\}$, 
where the star map $.^{\star}$ is the Galois automorphism
 in $G(\Q(\zeta_8)/ \mathbbm{Q})$, 
defined by $\zeta_{8} \mapsto \zeta_{8}^3$ (cf.~Fact~\ref{gau}), and the window $W$ is
the regular octagon centred at the origin, with vertices in the
directions that arise from the $8$th roots of unity by a rotation
through $\pi/8$, and of unit edge length; see~\cite{am,bj,ga}. Then
$\varLambda_{\rm AB}$ is associated with the well-known Ammann-Beenker
tiling of the plane with squares and rhombi, both having edge length
$1$; see Figure~\ref{fig:ab} for an illustration. Other examples of
aperiodic 
cyclotomic model sets with
 underlying $\Z$-module $\Z[\zeta_n]$ are the vertex sets of the
 T\"ubingen triangle tiling~\cite{bk1,bk2} ($n=5$) and the shield
 tiling~\cite{ga} ($n=12$); see~\cite{H0,H} for details. 
\end{ex}

\begin{figure}
\centerline{\epsfysize=0.5\textwidth\epsfbox{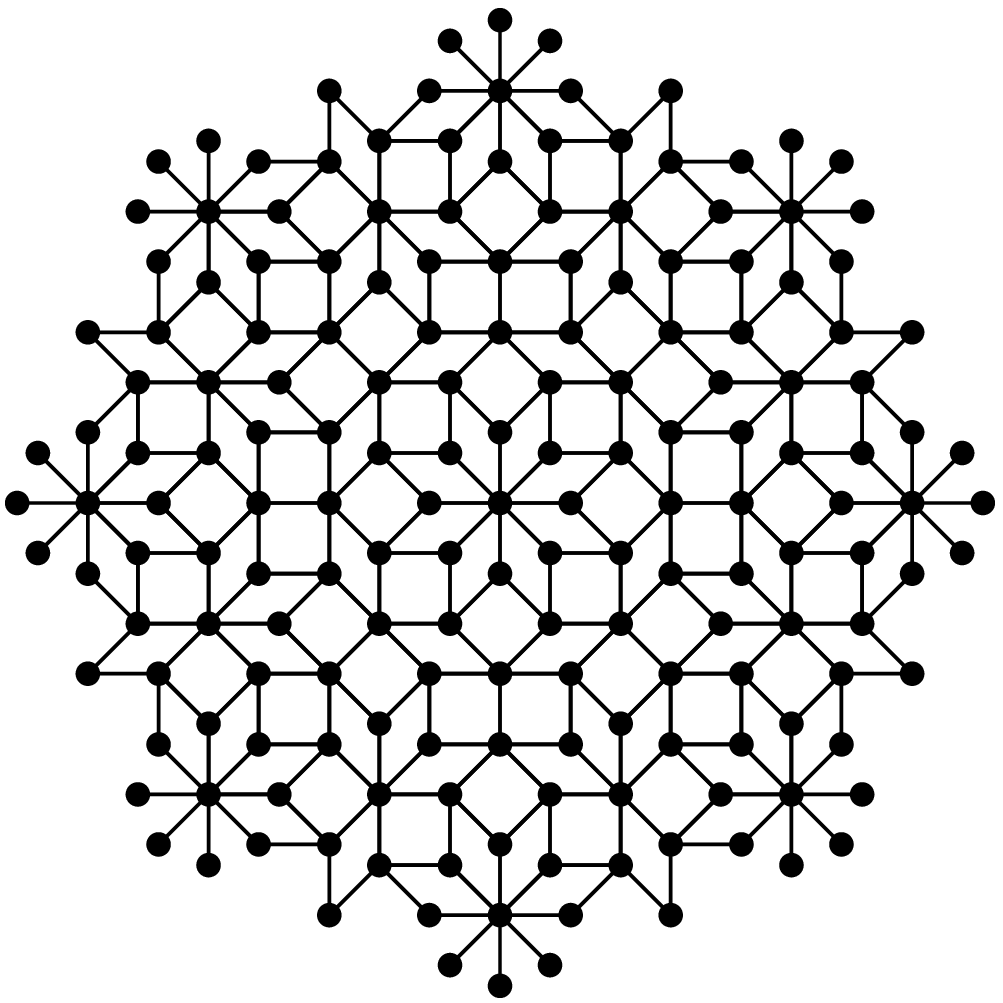}\hspace{0.2\textwidth}
\epsfysize=0.5\textwidth\epsfbox{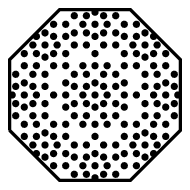}}
\caption{A central patch of the eightfold Ammann-Beenker tiling with vertex set $\varLambda_{\rm AB}$ (left) and the $.^{\star}$-image of $\varLambda_{\rm AB}$ inside the octagonal window (right), with relative scale as described in the text.}
\label{fig:ab}
\end{figure}

The following result on the maximal real subfield
$\Q(\zeta_n+\bar{\zeta}_n)$ of the $n$th cyclotomic field follows
immediately from Fact~\ref{gau}. 

\begin{fact}
If $n\geq 3$, one has $[\Q(\zeta_n+\bar{\zeta}_n) : \mathbbm{Q}] =
\phi(n)/2$.
\end{fact}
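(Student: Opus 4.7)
The plan is to apply Galois theory to the cyclotomic extension $\Q(\zeta_n)/\Q$, using the identification of the Galois group provided by Fact~\ref{gau}. Since $\zeta_n + \bar{\zeta}_n$ is manifestly real and contained in $\Q(\zeta_n)$, the field $\Q(\zeta_n + \bar{\zeta}_n)$ is an intermediate field of $\Q(\zeta_n)/\Q$, and the main task is to show that $[\Q(\zeta_n):\Q(\zeta_n+\bar{\zeta}_n)] = 2$; the claim then follows from the multiplicativity of degrees together with Fact~\ref{gau}.

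First I would observe that $\bar{\zeta}_n = \zeta_n^{-1}$, so complex conjugation restricted to $\Q(\zeta_n)$ corresponds, under the isomorphism $G(\Q(\zeta_n)/\Q) \simeq (\Z/n\Z)^{\times}$ of Fact~\ref{gau}, to the residue class of $-1$ modulo $n$. For $n\geq 3$ one has $-1 \not\equiv 1 \pmod{n}$, so this is a nontrivial element of order $2$ in the Galois group, and it clearly fixes $\zeta_n+\bar{\zeta}_n$. Hence $\Q(\zeta_n+\bar{\zeta}_n)$ is contained in the fixed field of the subgroup $\{1,-1\}\subset (\Z/n\Z)^{\times}$.

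For the reverse inclusion, I would note that $\zeta_n$ is a root of the polynomial $X^2 - (\zeta_n+\bar{\zeta}_n)X + 1 \in \Q(\zeta_n+\bar{\zeta}_n)[X]$, so $[\Q(\zeta_n):\Q(\zeta_n+\bar{\zeta}_n)] \leq 2$. Combined with the preceding paragraph (which shows that this degree is at least $2$ because complex conjugation acts nontrivially on $\Q(\zeta_n)$ but trivially on $\Q(\zeta_n+\bar{\zeta}_n)$), we get equality. Applying Fact~\ref{gau} and the tower law to $\Q \subset \Q(\zeta_n+\bar{\zeta}_n) \subset \Q(\zeta_n)$ then yields $[\Q(\zeta_n+\bar{\zeta}_n):\Q] = \phi(n)/2$.

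The only nontrivial step is the inequality $-1\not\equiv 1\pmod n$ for $n\geq 3$, which is immediate, so there is no real obstacle; the result is essentially a direct consequence of Fact~\ref{gau} once one identifies complex conjugation inside the Galois group.
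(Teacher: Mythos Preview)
Your proof is correct and is precisely the standard argument that the paper has in mind: the paper does not give an explicit proof but simply states that the result ``follows immediately from Fact~\ref{gau},'' and your write-up spells out exactly those details (identifying complex conjugation with $-1\in(\Z/n\Z)^{\times}$, noting it is nontrivial for $n\geq 3$, and using the quadratic $X^2-(\zeta_n+\bar{\zeta}_n)X+1$).
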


Note that the ring $\Z[\zeta_n+\bar{\zeta}_n]$ is the ring of integers in
$\Q(\zeta_n+\bar{\zeta}_n)$; cf.~\cite[Proposition 2.16]{Wa}. A real algebraic integer $\lambda$ is called a {\em Pisot-Vijayaraghavan} number ({\em
  PV-number}) if $\lambda>1$ while all (algebraic) conjugates of
$\lambda$ have moduli strictly less than $1$. The following fact follows from~\cite[Ch.~1, Theorem 2]{Sa}.

\begin{fact}\label{pisot}
For $n\geq 3$, there is a PV-number of degree $\phi(n)/2$ in $\Z[\zeta_n+\bar{\zeta}_n]$.
\end{fact}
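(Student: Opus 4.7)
The plan is to reduce the claim to the general assertion that every real algebraic number field of degree $d\in\N$ over $\Q$ contains a PV-number of degree $d$. Granting this, I would apply it to the maximal real subfield $\Qnn$ of $\Qn$, which by the preceding fact has degree $\phi(n)/2$ over $\Q$, and note that any PV-number, being by definition a real algebraic integer, must lie in the ring of integers of $\Qnn$, which equals $\Znn$ as recorded in the paragraph preceding the fact.

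For the general assertion I would argue via Minkowski's theorem on linear forms. Let $K$ be a real algebraic number field of degree $d$, and let $\sigma_1=\id,\sigma_2,\dots,\sigma_d$ be its (necessarily real) embeddings into $\C$. The map $\alpha\mapsto(\sigma_1(\alpha),\dots,\sigma_d(\alpha))$ embeds the ring of integers $O_K$ as a full-rank lattice in $\R^d$ with non-zero covolume $\sqrt{|d_K|}$. For any sufficiently small $\epsilon>0$, Minkowski's theorem yields a non-zero $\xi\in O_K$ with $|\sigma_j(\xi)|<\epsilon$ for $j=2,\dots,d$ and $|\sigma_1(\xi)|\leq C\epsilon^{-(d-1)}$ for a constant $C$ depending only on $K$. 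The integrality condition $|N_{K/\Q}(\xi)|=\prod_{j=1}^{d}|\sigma_j(\xi)|\geq 1$ then forces $|\sigma_1(\xi)|\geq\epsilon^{-(d-1)}$, so $\sigma_1(\xi)$ is large; after possibly replacing $\xi$ by $-\xi$, one obtains a PV-number $\xi\in O_K$.

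The main obstacle will be ensuring that the PV-number produced has degree exactly $d$ over $\Q$, rather than lying in a proper subfield. I would address this by exploiting the fact that $K$ has only finitely many proper subfields $L\subsetneq K$, and that each $O_L=L\cap O_K$ sits in a proper linear subspace of $\R^d$ under the chosen embedding. A mild genericity modification of the Minkowski construction, for instance by perturbing $\xi$ to $\xi+N\eta$ for a fixed integer generator $\eta\in O_K$ of $K$ and $N\in\N$ chosen both to preserve the PV property and to force the result outside every proper sublattice, yields a PV-number of full degree $d$. This genericity step is precisely the content of Salem's argument in Ch.~1, Theorem~2 of his book, which may alternatively be invoked directly to conclude.
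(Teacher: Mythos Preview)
The paper does not give a proof of this fact at all; it simply records that it follows from \cite[Ch.~1, Theorem~2]{Sa}. Your proposal ultimately rests on the same source, since your final sentence invokes Salem's theorem directly, and the preceding Minkowski sketch is essentially a reconstruction of Salem's argument. In that sense your approach agrees with the paper's, only with more detail supplied.

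One technical slip is worth flagging: your claim that the embeddings of a real algebraic number field $K$ into $\C$ are ``necessarily real'' is false in general (e.g., $\Q(\sqrt[3]{2})$ is a subfield of $\R$ but has one real and two complex embeddings). The Minkowski argument still goes through with complex places handled in the standard way, but as stated your general lemma is incorrectly justified. Fortunately this does not affect the application: $\Qnn$ is the fixed field of complex conjugation in the abelian extension $\Qn/\Q$ and is therefore \emph{totally real}, so for the specific field needed here all embeddings are indeed real and your sketch applies verbatim.
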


For $n\geq 3$ and $\lambda\in\Z[\zeta_n+\bar{\zeta}_n]$, we denote by $m_{\lambda}^{\star}$ the $\Z$-module
endomorphism of $\Z[\zeta_n]^{\star}$ which is given by
$m_{\lambda}^{\star}(z^{\star})=(\lambda z)^{\star}$, where $z\in
\Z[\zeta_n]$ and $.^{\star}$ is a star map of a cyclotomic model set with underlying $\Z$-module $\Z[\zeta_n]$.

\begin{lem}\label{r2}
Let $n\,\in\,\mathbbm{N}\setminus \{1,2,3,4,6\}$, and let $.^{\star}$
be a star map of a cyclotomic model set with underlying $\Z$-module $\Z[\zeta_n]$. Then, for any
PV-number $\lambda$ of degree $\phi(n)/2$ in
$\Z[\zeta_n+\bar{\zeta}_n]$, a suitable power of $m_{\lambda}^{\star}$
is contractive, i.e., there is a $k\in\N$ and a 
$\xi \in (0,1)$ such that $\Arrowvert (m_{\lambda}^{\star})^k(z^{\star})\Arrowvert\leq \xi\, \Arrowvert z^{\star}\Arrowvert$ holds for all $z\in \Z[\zeta_n]$.
\end{lem}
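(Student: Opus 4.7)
The plan is to extend $m_{\lambda}^{\star}$ by $\R$-linearity to an endomorphism of $(\R^{2})^{\phi(n)/2-1}$, recognise this extension as a direct sum of real scalar multiplications on the individual $\R^{2}$ factors, identify those scalars with the non-trivial Galois conjugates of $\lambda$, and then invoke the defining property of a Pisot-Vijayaraghavan number.

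First I would fix $j\in\{2,\dots,\phi(n)/2\}$ and note that on the $j$th factor $\R^{2}\cong\C$ the map $m_{\lambda}^{\star}$ sends $\sigma_{j}(z)\mapsto \sigma_{j}(\lambda z)=\sigma_{j}(\lambda)\,\sigma_{j}(z)$. Since $\lambda\in\Z[\zeta_{n}+\bar{\zeta}_{n}]\subset\R$ is fixed by complex conjugation, and since complex conjugation lies in $G(\Q(\zeta_{n})/\Q)$ and commutes with $\sigma_{j}$ by Fact~\ref{gau}, it follows that $\sigma_{j}(\lambda)$ is also fixed by complex conjugation, i.e., $\sigma_{j}(\lambda)\in\R$. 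Consequently, multiplication by $\sigma_{j}(\lambda)$ is a well-defined $\R$-linear self-map of $\C\cong\R^{2}$ that extends the action of $m_{\lambda}^{\star}$ on the $j$th component of $\Z[\zeta_{n}]^{\star}$.

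Next I would identify $\{\sigma_{2}(\lambda),\dots,\sigma_{\phi(n)/2}(\lambda)\}$ with the full set of non-trivial Galois conjugates of $\lambda$. Since complex conjugation fixes the totally real subfield $\Q(\zeta_{n}+\bar{\zeta}_{n})$, any two elements of $G(\Q(\zeta_{n})/\Q)$ differing by complex conjugation restrict to the same automorphism of $\Q(\zeta_{n}+\bar{\zeta}_{n})/\Q$. Hence the restrictions of $\sigma_{2},\dots,\sigma_{\phi(n)/2}$ to this subfield yield exactly the $\phi(n)/2-1$ non-trivial elements of $G(\Q(\zeta_{n}+\bar{\zeta}_{n})/\Q)$. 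Because $\lambda$ has degree $\phi(n)/2$ over $\Q$ by hypothesis, while $[\Q(\zeta_{n}+\bar{\zeta}_{n}):\Q]=\phi(n)/2$, we deduce $\Q(\lambda)=\Q(\zeta_{n}+\bar{\zeta}_{n})$, so that $\sigma_{2}(\lambda),\dots,\sigma_{\phi(n)/2}(\lambda)$ are precisely the Galois conjugates of $\lambda$ distinct from $\lambda$ itself.

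To finish, the PV-property of $\lambda$ gives $|\sigma_{j}(\lambda)|<1$ for all $j\geq 2$, so $\xi:=\max_{j\geq 2}|\sigma_{j}(\lambda)|<1$ is the spectral radius of the $\R$-linear extension of $m_{\lambda}^{\star}$. For any norm on $(\R^{2})^{\phi(n)/2-1}$, this guarantees that some power $(m_{\lambda}^{\star})^{k}$ has operator norm strictly less than $1$; indeed, with the Euclidean product norm, even $k=1$ works directly since $\|m_{\lambda}^{\star}(z^{\star})\|^{2}=\sum_{j=2}^{\phi(n)/2}|\sigma_{j}(\lambda)|^{2}\,|\sigma_{j}(z)|^{2}\leq \xi^{2}\,\|z^{\star}\|^{2}$. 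The only substantive step is the Galois-theoretic identification of the scalars $\sigma_{j}(\lambda)$ with the complete list of non-trivial conjugates of $\lambda$, which relies crucially on the hypothesis $\deg(\lambda)=\phi(n)/2$; everything else is formal.
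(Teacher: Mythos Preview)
Your proposal is correct and follows essentially the same route as the paper: identify the action of $m_{\lambda}^{\star}$ on the $j$th factor as multiplication by the real scalar $\sigma_{j}(\lambda)$, recognise these as the non-trivial conjugates of $\lambda$, and invoke the PV property to get $\xi<1$. The only differences are presentational: you supply the Galois-theoretic details the paper leaves implicit, and you work directly with the Euclidean product norm (getting $k=1$ immediately), whereas the paper passes through the maximum norm via equivalence of norms on $\R^{d}$.
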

\begin{proof} 
Since all norms on $\R^d$ are equivalent, it suffices to prove the
assertion in case of the maximum norm $\Arrowvert \cdot
\Arrowvert_{\infty}$ on $(\mathbbm{R}^{2})^{\phi(n)/2-1}$ with respect to the
Euclidean norm on $\R^2$ rather than considering the Euclidean norm $\Arrowvert \cdot
\Arrowvert$ on
$(\mathbbm{R}^{2})^{\phi(n)/2-1}$ itself. But in that case, the assertion
follows immediately with $k:=1$ and $$\xi:=\operatorname{max}\{\lvert \sigma_{j}(\lambda)\rvert\,|
\, j\in\{2,\dots,\phi(n)/2\}\}\,,$$ since the set
$\{\lambda,\sigma_{2}(\lambda),\dots,\sigma_{\phi(n)/2}(\lambda)\}$
equals the set of 
conjugates of $\lambda$. 
\end{proof}

\begin{lem}\label{rr2}
Let $n\geq 3$ and let $.^{\star}$ be a star map of a cyclotomic model
set with underlying $\Z$-module $\Z[\zeta_n]$. Then
$\Z[\zeta_n]^{\star}$ is dense in $(\R^2)^{\phi(n)/2-1}$.  
\end{lem}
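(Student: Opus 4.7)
The plan is to show that $\overline{M} = V$, where $M := \Z[\zeta_n]^\star$ and $V := (\R^2)^{\phi(n)/2-1}$. For $n \in \{3,4,6\}$ one has $\phi(n)=2$, so $V$ is the trivial group and the conclusion is immediate; assume throughout that $n \notin \{1,2,3,4,6\}$, which in particular makes Fact~\ref{pisot} and Lemma~\ref{r2} available. The argument splits into two steps: first I would show that the closed subgroup $\overline{M} \subset V$ is an $\R$-linear subspace, using the contractive dynamics supplied by the PV-number; then I would rule out the possibility that this subspace is proper, using the full-rank property of the Minkowski embedding of $\Z[\zeta_n]$.

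For the first step, fix a PV-number $\lambda \in \Z[\zeta_n+\bar\zeta_n]$ of degree $\phi(n)/2$ via Fact~\ref{pisot}, and let $T := (m_\lambda^\star)^k$ be the contraction with factor $\xi \in (0,1)$ from Lemma~\ref{r2}. Because $\lambda \cdot \Z[\zeta_n] \subset \Z[\zeta_n]$, the continuous linear map $T$ preserves $M$ and hence $\overline{M}$. Let $V_0$ denote the identity component of $\overline{M}$; by the classical structure theorem for closed subgroups of a finite-dimensional real vector space, $V_0$ is an $\R$-linear subspace and $\overline{M}/V_0$ is a discrete subgroup of $V/V_0$. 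The eigenvalues of $T$ are the conjugates $\sigma_j(\lambda)^k$ for $j=2,\dots,\phi(n)/2$, all non-zero, so $T$ is invertible on $V$; by continuity, $T(V_0) \subset V_0$, and finite-dimensional injectivity promotes this to $T(V_0) = V_0$. Hence $T$ descends to an injective, still $\xi$-contractive, linear map $\widetilde{T}$ on $V/V_0$ that preserves the discrete group $\overline{M}/V_0$. For any $v$ in this group, $\widetilde{T}^j(v) \to 0$ while remaining in a discrete set, so $\widetilde{T}^j(v) = 0$ for all large $j$; injectivity of $\widetilde{T}$ then forces $v = 0$. Thus $\overline{M}/V_0 = \{0\}$ and $\overline{M} = V_0$ is a linear subspace of $V$.

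For the second step, consider the full Minkowski embedding $\Psi : \Z[\zeta_n] \hookrightarrow \C^{\phi(n)/2} \cong \R^2 \times V$, in which the first $\R^2$-factor collects the pair $\{\operatorname{id}, \bar{\,\cdot\,}\}$ of embeddings and $V$ collects the remaining $\phi(n)/2-1$ conjugate pairs, so that $\Psi(z) = (z, z^\star)$. A standard fact from algebraic number theory is that $\Psi(\Z[\zeta_n])$ is a lattice of full rank $\phi(n)$ in $\R^{\phi(n)}$. If $\overline{M}$ were a proper subspace of $V$, there would exist a non-zero $\R$-linear functional $\psi : V \to \R$ vanishing on $M$; composing with the coordinate projection $\pi : \R^2 \times V \to V$, the functional $\psi \circ \pi$ would vanish on the full-rank lattice $\Psi(\Z[\zeta_n])$, hence on its real linear span $\R^2 \times V$, forcing $\psi = 0$. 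This contradiction yields $\overline{M} = V$, as required.

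The main obstacle is the first step, specifically ensuring that the induced map $\widetilde{T}$ on the quotient $V/V_0$ is injective so that the contraction genuinely trivializes the discrete part $\overline{M}/V_0$. This injectivity rests on the invertibility of $T$ on $V$, which in turn follows from the PV-number $\lambda$ having no zero conjugates. Once $\overline{M}$ is known to be a linear subspace, the remaining step is a clean consequence of the full-rank property of the Minkowski lattice and does not require any further number-theoretic input.
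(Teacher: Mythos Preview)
Your proof is correct and uses the same two essential ingredients as the paper's argument: the PV-contraction of Lemma~\ref{r2} and the full-rank Minkowski embedding of $\Z[\zeta_n]$. The packaging, however, is different. The paper proceeds more directly: for each $k$, the set $\{(\lambda^k z,(m_\lambda^\star)^k(z^\star))\mid z\in\Z[\zeta_n]\}$ is still a full lattice in $\R^2\times(\R^2)^{\phi(n)/2-1}$, so its projection to the second factor---which is contained in $\Z[\zeta_n]^\star$---carries an $\R$-basis of $(\R^2)^{\phi(n)/2-1}$; choosing $k$ large makes this basis arbitrarily short by Lemma~\ref{r2}, whence density. Your route instead invokes the structure theorem for closed subgroups of a Euclidean space and lets the contraction kill the discrete quotient $\overline{M}/V_0$ via the injectivity of $\widetilde{T}$; the full-rank lattice then rules out $\overline{M}$ being a proper subspace through a dual functional argument. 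Both arguments are short; the paper's is slightly more hands-on and avoids quoting the structure theorem, while yours isolates more clearly which property of the PV-number is doing the work (invertibility plus contraction on the star side). One small remark: when you assert that $\widetilde{T}$ is still $\xi$-contractive on $V/V_0$, this uses $T(V_0)=V_0$ to rewrite the infimum defining the quotient norm; it is worth making that one line explicit.
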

\begin{proof}
If $n\in\{3,4,6\}$, one even has
$\Z[\zeta_n]^{\star}=(\R^2)^{\phi(n)/2-1}=\{0\}$. Otherwise, let $\lambda$ be a PV-number 
of degree $\phi(n)/2$ in $\Z[\zeta_n+\bar{\zeta}_n]$; cf.~Fact~\ref{pisot}. Then, since
$\Z[\zeta_n]$ is the maximal order of $\Q(\zeta_n)$ by
Remark~\ref{propms1}, for any $k\in\N$, the set $$\{(\lambda^k
z,(m_{\lambda}^{\star})^k(z^{\star}))\,|\,z\in\Z[\zeta_n]\}$$ is a (full) lattice in
$\R^{2}\times(\R^2)^{\phi(n)/2-1}$; cf.~\cite[Ch.~2,
Sec.~3]{Bo}. In conjunction with Lemma~\ref{r2}, this implies that,
for any $\varepsilon>0$, the $\Z$-module $\Z[\zeta_n]^{\star}$
contains an $\R$-basis of $(\R^2)^{\phi(n)/2-1}$ whose elements have
norms $\leq\varepsilon$. The assertion follows.
\end{proof}

\begin{lem}\label{dilate}
Let $n\geq 3$ and let
$\varLambda$ be a cyclotomic model
set with underlying $\Z$-module $\Z[\zeta_n]$. Then, for any finite set $F\subset \Q(\zeta_n)$, there is a homothety $h\!:\, \R^2 \rightarrow \R^2$ such that $h(F)\subset \varLambda$.
\end{lem}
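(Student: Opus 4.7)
The plan is to construct $h(z)=cz+t$ by choosing a large positive scaling factor $c$ that both clears denominators in $F$ and shrinks star-images toward $0$, and then adjusting the translation $t$ so that all the star-images land inside the window. By absorbing into $t$ the translation that shifts $\varLambda(W)$ to $\varLambda$, I may assume without loss of generality that $\varLambda=\varLambda(W)$.

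Since $\Z[\zeta_n]$ is the ring of integers of $\Q(\zeta_n)$ by Remark~\ref{propms1}, there is an $N\in\mathbbm{N}$ with $NF\subset\Z[\zeta_n]$. The case $n\in\{3,4,6\}$ is immediate because then $\Z[\zeta_n]^{\star}=\{0\}=W$ and $\varLambda(W)=\Z[\zeta_n]$, so $h(z):=Nz$ already works. Assume from now on that $n\notin\{3,4,6\}$. By Fact~\ref{pisot}, fix a PV-number $\lambda\in\Z[\zeta_n+\bar{\zeta}_n]$ of degree $\phi(n)/2$. Lemma~\ref{r2} supplies $k\in\mathbbm{N}$ and $\xi\in(0,1)$ with $\Arrowvert(m_{\lambda}^{\star})^{k}(z^{\star})\Arrowvert\leq\xi\,\Arrowvert z^{\star}\Arrowvert$ for every $z\in\Z[\zeta_n]$; iterating gives $\Arrowvert(m_{\lambda}^{\star})^{kj}(z^{\star})\Arrowvert\leq\xi^{j}\Arrowvert z^{\star}\Arrowvert$ for each $j\in\mathbbm{N}$.

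Pick any $w_0\in\operatorname{int}W$ and an open ball $B_{\varepsilon}(w_0)\subset W$. As $F$ is finite, choose $j$ large enough that $(m_{\lambda}^{\star})^{kj}((Nf)^{\star})$ has norm less than $\varepsilon/2$ for every $f\in F$. By the density of $\Z[\zeta_n]^{\star}$ in $(\R^{2})^{\phi(n)/2-1}$ provided by Lemma~\ref{rr2}, select $s\in\Z[\zeta_n]$ with $\Arrowvert s^{\star}-w_0\Arrowvert<\varepsilon/2$. Set $c:=\lambda^{kj}N>0$, $t:=s$, and $h(z):=cz+t$. For each $f\in F$, the point $h(f)=\lambda^{kj}Nf+s$ lies in $\Z[\zeta_n]$, since $\lambda^{kj}\in\Z[\zeta_n+\bar{\zeta}_n]\subset\Z[\zeta_n]$, $Nf\in\Z[\zeta_n]$, and $s\in\Z[\zeta_n]$. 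Moreover, $h(f)^{\star}=(m_{\lambda}^{\star})^{kj}((Nf)^{\star})+s^{\star}$ lies within distance $\varepsilon$ of $w_0$, hence in $W$. Thus $h(F)\subset\varLambda(W)$, as required.

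The delicate point is the simultaneous fulfilment of two constraints: the algebraic one, $h(f)\in\Z[\zeta_n]$, which forces $c$ to preserve the ring and $t$ to lie in $\Z[\zeta_n]$; and the geometric one, $h(f)^{\star}\in W$, which demands control over the star-images. The PV-number $\lambda$ is engineered to handle the first constraint (lying in the correct ring while having all non-identity conjugates of modulus less than one), and Lemma~\ref{rr2} then handles the second by allowing a purely internal-space fine-tuning via an integral translation $s$ whose image $s^{\star}$ can be placed arbitrarily close to any prescribed point of $(\R^{2})^{\phi(n)/2-1}$.
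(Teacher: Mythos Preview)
Your proof is correct and follows essentially the same approach as the paper's: clear denominators to land in $\Z[\zeta_n]$, use the PV-number from Fact~\ref{pisot} together with Lemma~\ref{r2} to contract the star-images near the origin, and invoke the density result of Lemma~\ref{rr2} to supply an integral translation whose star-image sits near a chosen interior point of $W$. The only difference is cosmetic ordering---the paper first picks $z_0$ with $z_0^{\star}\in\operatorname{int}W$ and then contracts into the shifted neighbourhood $V=\operatorname{int}W-z_0^{\star}$, whereas you first contract into a small ball about $0$ and then translate---and your explicit iteration of the contraction is, if anything, slightly more careful than the paper's shorthand appeal to Lemma~\ref{r2}.
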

\begin{proof}
Without loss of generality, we may assume that $\varLambda$ is of the
form $\varLambda(W)$ (see Definition~\ref{deficy}) and, further, that $F\neq\varnothing$. By~\cite[Ch.~7.1, Proposition 1.1]{La} in conjunction
with Fact~\ref{gau} and Remark~\ref{propms1}, there is an $l\in
\mathbbm{N}$ with $l F \subset \Z[\zeta_n]$. Let $.^{\star}$ be the star map that is used in
 the construction of $\varLambda(W)$. If $n\in\{3,4,6\}$, we are done by
 defining the homothety $h\!:\, \R^2 \rightarrow \R^2$ by
$
z\mapsto lz
$. Otherwise, by 
$\operatorname{int}W\neq \varnothing$ in conjunction with Lemma~\ref{rr2}, there follows the existence of a
suitable $z_{0}\in \Z[\zeta_n]$ with $z_{0}^{\star}\in
\operatorname{int}W$. Consider the open neighbourhood $V:= \operatorname{int}W -  z_{0}^{\star}$ of $0$ in
$(\R^2)^{\phi(n)/2-1}$. Next, choose a PV-number $\lambda$
of degree $\phi(n)/2$ in $\Z[\zeta_n+\bar{\zeta}_n]$;
cf.~Fact~\ref{pisot}. By virtue of Lemma~\ref{r2}, there is a $k\in\mathbbm{N}$ such that $$(m_{\lambda}^{\star})^{k}((lF)^{\star})\subset V\,.$$ It follows that $\{(\lambda^{k} z + z_{0})^{\star}\, |\, z\in lF\}\subset \operatorname{int}W$ and, further, that $h(F)\subset \varLambda(W)$, where $h\!:\, \R^2 \rightarrow \R^2$ is the homothety given by $z \mapsto (l\lambda^{k}) z + z_{0}$.
\end{proof}   

\begin{fact}\label{klkl}
Let $n\geq 3$ and let
$\varLambda$ be a cyclotomic model
set  with underlying $\Z$-module $\Z[\zeta_n]$. Then, one has
$\mathbbm{K}_{\varLambda}\subset\Q(\zeta_n)$ and thus $\mathbbm{k}_{\varLambda}\subset\Q(\zeta_n+\bar{\zeta}_n)$. 
\end{fact}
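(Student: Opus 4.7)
The plan is to unravel the definitions of $\mathbbm{K}_{\varLambda}$ and $\mathbbm{k}_{\varLambda}$ against the specific structure of cyclotomic model sets from Definition~\ref{deficy}. By construction, $\varLambda$ is a translate of some $\varLambda(W)\subset \Z[\zeta_n]$, so the difference set $\varLambda-\varLambda$ (which is translation-invariant) is contained in $\Z[\zeta_n]-\Z[\zeta_n]=\Z[\zeta_n]\subset \Q(\zeta_n)$. This is the only real input required for the first containment.

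Next, I would observe that $\Q(\zeta_n)$ is closed under complex conjugation, since $\overline{\zeta_n}=\zeta_n^{n-1}\in\Q(\zeta_n)$. Consequently $\overline{\varLambda-\varLambda}\subset\Q(\zeta_n)$ as well. Combining this with the previous step and applying the definition $\mathbbm{K}_{\varLambda}=\Q\bigl((\varLambda-\varLambda)\cup\overline{(\varLambda-\varLambda)}\bigr)$, one immediately gets $\mathbbm{K}_{\varLambda}\subset\Q(\zeta_n)$.

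For the second containment, I would intersect with $\R$: $\mathbbm{k}_{\varLambda}=\mathbbm{K}_{\varLambda}\cap\R\subset\Q(\zeta_n)\cap\R$. The conclusion then follows from the standard fact that the maximal real subfield of $\Q(\zeta_n)$ is exactly $\Q(\zeta_n+\bar\zeta_n)$, which is an elementary consequence of Fact~\ref{gau} (complex conjugation is an order-two element of the Galois group, so its fixed field is a subfield of index two, namely $\Q(\zeta_n+\bar\zeta_n)$). There is no real obstacle here; the statement is essentially a bookkeeping check that the definitions of $\mathbbm{K}_{\varLambda}$ and $\mathbbm{k}_{\varLambda}$ behave well for the concrete class of cyclotomic model sets, and the only potentially subtle point is remembering that the star map plays no role at this stage because we only need that $\varLambda\subset\Z[\zeta_n]+t$ for some translation $t\in\R^2$.
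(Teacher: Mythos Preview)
Your argument is correct. The paper states this result as a \emph{Fact} without proof, presumably because it is an immediate consequence of the definitions; your write-up spells out exactly the intended reasoning (difference set lies in $\Z[\zeta_n]$, closure of $\Q(\zeta_n)$ under complex conjugation, and identification of $\Q(\zeta_n)\cap\R$ with $\Q(\zeta_n+\bar{\zeta}_n)$).
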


\begin{prop}\label{cmsads}
Cyclotomic model sets are algebraic Delone
sets.
\end{prop}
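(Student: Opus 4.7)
The proof will consist of little more than assembling the preparatory lemmas and facts that have been developed for precisely this purpose. Fix a cyclotomic model set $\varLambda$ with underlying $\Z$-module $\Z[\zeta_n]$ (with $n\geq 3$). By Remark~\ref{propms1}, $\varLambda$ is even a Meyer set, hence a Delone set in $\R^2$, so the only thing to verify is that the two algebraic properties (Alg) and (Hom) of Definition~\ref{algdeldef} hold.

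For (Alg), the plan is to invoke Fact~\ref{klkl}, which immediately gives $\mathbbm{K}_{\varLambda}\subset\Q(\zeta_n)$. Combined with Fact~\ref{gau}, this yields
\[
[\mathbbm{K}_{\varLambda}:\Q]\,\leq\,[\Q(\zeta_n):\Q]\,=\,\phi(n)\,<\,\infty\,,
\]
so (Alg) is immediate.

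For (Hom), let $F$ be an arbitrary finite subset of $\mathbbm{K}_{\varLambda}$. By Fact~\ref{klkl} we have $F\subset\Q(\zeta_n)$, so Lemma~\ref{dilate} supplies a homothety $h\!:\,\R^2\rightarrow\R^2$ with $h(F)\subset\varLambda$. This verifies (Hom) and completes the proof.

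There is no genuine obstacle left at this stage; all the substantive work has already been done in Lemma~\ref{dilate} (which used the PV-number construction from Lemmas~\ref{r2} and~\ref{rr2} to push any finite subset of $\Q(\zeta_n)$ into the window under a contracting power of $m_{\lambda}^{\star}$) and in Fact~\ref{klkl} (which bounds $\mathbbm{K}_{\varLambda}$ inside $\Q(\zeta_n)$). The proposition is therefore essentially a bookkeeping step combining these ingredients.
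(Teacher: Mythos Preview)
Your proof is correct and follows exactly the same approach as the paper: Remark~\ref{propms1} gives the Delone (in fact Meyer) property, Fact~\ref{klkl} combined with Fact~\ref{gau} gives (Alg), and Fact~\ref{klkl} combined with Lemma~\ref{dilate} gives (Hom). The paper's proof is simply a one-sentence version of what you wrote.
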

\begin{proof}
Any cyclotomic model
set is a Meyer
set by Remark~\ref{propms1}. Properties (Alg) and (Hom) follow
immediately from Fact~\ref{klkl} in conjunction with Fact~\ref{gau}
and Lemma~\ref{dilate}, respectively. 
\end{proof}

As another immediate consequence of Lemma~\ref{dilate}, one verifies
the following

\begin{fact}
Let $n\geq 3$ and let $\varLambda$ be a cyclotomic model
set with underlying $\Z$-module $\Z[\zeta_n]$. Then the set of 
$\varLambda$-directions is precisely the set of
 $\Zn$-directions. 
\end{fact}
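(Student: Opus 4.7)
The plan is to verify the two inclusions between $\varLambda$-directions and $\Z[\zeta_n]$-directions separately, each of which is short once the right tool is identified.

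For the inclusion $\{\varLambda\text{-directions}\}\subset \{\Z[\zeta_n]\text{-directions}\}$, I would simply use that, by Definition~\ref{deficy}, the cyclotomic model set $\varLambda$ is a translate of a subset $\varLambda(W)$ of $\Z[\zeta_n]$. Consequently, $\varLambda-\varLambda\subset \Z[\zeta_n]-\Z[\zeta_n]=\Z[\zeta_n]$, so every non-zero vector in $\varLambda-\varLambda$ lies in $\Z[\zeta_n]\setminus\{0\}$ and therefore defines a $\Z[\zeta_n]$-direction. This step is formal and requires no effort.

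For the reverse inclusion, the key observation is that Lemma~\ref{dilate} allows us to realize any two-point configuration inside $\Q(\zeta_n)$, and in particular inside $\Z[\zeta_n]$, as a homothetic image sitting in $\varLambda$. Concretely, fix a non-zero $z\in\Z[\zeta_n]$ and apply Lemma~\ref{dilate} to the finite set $F:=\{0,z\}\subset \Z[\zeta_n]\subset\Q(\zeta_n)$. This produces a homothety $h\!:\,\R^2\to\R^2$, $w\mapsto \lambda w+t$ with $\lambda>0$, such that $h(0),h(z)\in\varLambda$. The vector
\[
h(z)-h(0)\,=\,\lambda z
\]
then lies in $\varLambda-\varLambda$ and is a positive real multiple of $z$, so the direction of $z$ is a $\varLambda$-direction.

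I do not expect any genuine obstacle here: the entire content of the statement is encoded in Lemma~\ref{dilate}, and the verification reduces to applying it to two-point sets of the form $\{0,z\}$. The only point to keep in mind is that the definition of $S$-direction refers to the difference set $S-S$, which is why it suffices to produce \emph{any} pair of points in $\varLambda$ whose difference is parallel to the prescribed element of $\Z[\zeta_n]\setminus\{0\}$, rather than to hit $z$ itself.
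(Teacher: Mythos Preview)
Your proof is correct and follows exactly the approach indicated in the paper, which states the Fact as ``another immediate consequence of Lemma~\ref{dilate}'' without spelling out the details. Your two-inclusion argument, using $\varLambda-\varLambda\subset\Z[\zeta_n]$ for one direction and applying Lemma~\ref{dilate} to $\{0,z\}$ for the other, is precisely what the paper has in mind.
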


An analysis of the proof of Theorem~\ref{main3gen} gives the following

\begin{theorem}\label{main3gencoro}
For all $n\geq 3$, there is a finite set $N_{n}\subset \Q$
 such that, for all cyclotomic model sets $\varLambda$ with underlying
 $\Z$-module $\Z[\zeta_n]$ and all sets $U$ of four pairwise
non-parallel $\Zn$-directions, one has the following: If $U$ has the property that the cross ratio of slopes of the
directions of $U$, arranged in order of increasing angle with the
positive real axis, does not map under the norm
$N_{\Q(\zeta_n+\bar{\zeta}_n)/\Q}$ to $N_{n}$, then the convex subsets of $\varLambda$ are determined by the $X$-rays in the directions of $U$.
\end{theorem}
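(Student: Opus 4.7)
The plan is to repeat the argument of Theorem~\ref{main3gen}(a) with one change: instead of working inside each individual field $\mathbbm{k}_{\varLambda}$, I exploit the fact that all cyclotomic model sets with underlying $\Z$-module $\Z[\zeta_{n}]$ satisfy $\mathbbm{k}_{\varLambda}\subset\Q(\zeta_{n}+\bar{\zeta}_{n})$ by Fact~\ref{klkl}. This common overfield gives the uniform constant $N_{n}$.

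Concretely, first I would apply Theorem~\ref{intersectk8general} to the real algebraic number field $\kk=\Q(\zeta_{n}+\bar{\zeta}_{n})$ and set
\[
N_{n}\,\,:=\,\,N_{\Q(\zeta_{n}+\bar{\zeta}_{n})/\Q}\Big(\big(\bigcup_{m\geq 4} f_{m}(D_{m})\big)\cap \Q(\zeta_{n}+\bar{\zeta}_{n})\Big)\,,
\]
which is a finite subset of $\Q$ depending only on $n$. Next, let $\varLambda$ be any cyclotomic model set with underlying $\Z$-module $\Z[\zeta_{n}]$, so by Proposition~\ref{cmsads} it is an algebraic Delone set, and let $U$ be a set of four pairwise non-parallel $\Z[\zeta_{n}]$-directions whose cross ratio $c$ (of slopes, ordered as described) does not map to $N_{n}$ under $N_{\Q(\zeta_{n}+\bar{\zeta}_{n})/\Q}$. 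Note that by the Fact preceding the theorem the $\Z[\zeta_{n}]$-directions coincide with the $\varLambda$-directions, so $U$ is a set of $\varLambda$-directions in the sense required by the earlier machinery.

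To finish, I argue by contraposition: suppose the convex subsets of $\varLambda$ are not determined by the $X$-rays in the directions of $U$. Then Proposition~\ref{characungen} produces a $U$-polygon in $\varLambda$, and Theorem~\ref{finitesetncr0gen} forces
\[
c\,\,\in\,\,\Big(\bigcup_{m\geq 4} f_{m}(D_{m})\Big)\cap\mathbbm{k}_{\varLambda}\,.
\]
Since $\mathbbm{k}_{\varLambda}\subset\Q(\zeta_{n}+\bar{\zeta}_{n})$ by Fact~\ref{klkl}, the element $c$ lies in $\big(\bigcup_{m\geq 4} f_{m}(D_{m})\big)\cap\Q(\zeta_{n}+\bar{\zeta}_{n})$, so applying $N_{\Q(\zeta_{n}+\bar{\zeta}_{n})/\Q}$ to $c$ lands in $N_{n}$, contradicting the hypothesis on $U$.

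There is no real obstacle here: all of the technical work (the $p$-adic bounding argument for the norm image, the existence of $U$-polygons forcing a cross ratio in $\bigcup_{m} f_{m}(D_{m})$, and the characterization of determination via $U$-polygons) has already been done in Theorems~\ref{finitesetncr0gen} and~\ref{intersectk8general} and in Proposition~\ref{characungen}. The only point worth flagging is that one must use the \emph{same} ambient field $\Q(\zeta_{n}+\bar{\zeta}_{n})$ for every choice of $\varLambda$, which is precisely what Fact~\ref{klkl} (a consequence of $\varLambda-\varLambda\subset\Z[\zeta_{n}]$) allows.
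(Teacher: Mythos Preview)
Your argument is correct and is precisely the ``analysis of the proof of Theorem~\ref{main3gen}'' that the paper alludes to: one replaces the variable field $\mathbbm{k}_{\varLambda}$ by the fixed overfield $\Q(\zeta_{n}+\bar{\zeta}_{n})$ via Fact~\ref{klkl}, applies Theorem~\ref{intersectk8general} once to this field to obtain the uniform finite set $N_{n}$, and then runs Proposition~\ref{characungen} and Theorem~\ref{finitesetncr0gen} exactly as in Theorem~\ref{main3gen}(a). There is nothing to add.
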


The above analysis allows the construction of specific sets $U$ of four pairwise
non-parallel $\Zn$-directions having the property that, for all
cyclotomic model sets $\varLambda$ with underlying $\Z$-module
$\Z[\zeta_n]$, the convex subsets of $\varLambda$ are determined by
the corresponding $X$-rays. For example, the sets of $\Zn$-directions 
parallel to the elements of the following sets have this property:
$U_n:=\{1,1+\zeta_n,1+2\zeta_n,1+5\zeta_n\}$,
$U_n':=\{1,2+\zeta_n,\zeta_n,-1+2\zeta_n\}$ and
$U_n'':=\{2+\zeta_n,3+2\zeta_n,1+\zeta_n,2+3\zeta_n\}$; cf.~\cite[Theorem
  2.54]{H} or~\cite[Theorem 15]{H0} and compare~\cite[Theorem 5.7 and
  Remark 5.8]{GG}. Often, one can even find examples that yield dense
lines in the corresponding discrete structures. For example, for the
practically relevant quasicrystallographic case of (aperiodic)
cyclotomic model sets $\varLambda$ with underlying $\Z$-module
$\Z[\zeta_n]$, where $n=5,8,10,12$, this is achieved by the sets of $\Zn$-directions 
parallel to the elements of the following sets, where $\tau$ denotes
the golden ratio (i.e., $\tau=(1+\sqrt{5})/2$):
$U_8:=\{1+\zeta_{8},(-1+\sqrt{2})+\sqrt{2}\zeta_{8},(-1-\sqrt{2})+\zeta_{8},-2+(-1+\sqrt{2})\zeta_8\}$, $
U_{5}:=U_{10}:=\{(1+\tau)+\zeta_{5},(\tau-1)+\zeta_{5},-\tau+\zeta_{5},2\tau-\zeta_{5}\}
$ and $
U_{12}:=\{1,2+\zeta_{12},\zeta_{12},\sqrt{3}-\zeta_{12}\}$, respectively; cf.~\cite[Theorem 2.56, Example 2.57 and Remark 2.58]{H} or~\cite[Theorem 16, Example 3 and Remark 40]{H0}. Note that orders $5,8,10$ and $12$ occur as standard
cyclic symmetries of genuine quasicrystals; cf.~\cite{St}.

\begin{prob}
In the crystallographic case of the square lattice $\Z[i]$, Gardner and Gritzmann were able to
show that the convex subsets of $\Z[i]$ are determined by the $X$-rays
in the directions of \emph{any} set $U$ of seven pairwise
non-parallel $\Z[i]$-directions; cf.~\cite[Theorem 5.7]{GG}. It would
be interesting to know if, for all cyclotomic model sets $\varLambda$ with
underlying $\LZ$-module $\Z[\zeta_n]$, there exists a
natural number $k\in\LN$ such that the convex subsets of
$\varLambda$ are determined by the $X$-rays in the directions of any set $U$ of $k$ pairwise
non-parallel $\Z[\zeta_n]$-directions. Assuming an affirmative answer,
a weak relation between $n$ and $k$ was demonstrated in~\cite[Corollary 5.5]{H3}.
\end{prob}

\section*{Final remark}
For a summary of results for model sets associated with the famous
\emph{Penrose tiling} of the plane, see~\cite{bh}. These so-called
\emph{Penrose model sets} can also be seen to be algebraic Delone
sets. The algorithmic \emph{reconstruction problem} of discrete
tomography of cyclotomic model sets has been studied
in~\cite{BG2}. In~\cite{H2}, it is shown how the results for the
planar case obtained in~\cite{BG2} and the present text can be
lifted to the practically relevant case of so-called \emph{icosahedral
  model sets} in $\R^3$. For a completer overview of both
uniqueness and computational complexity results in the discrete
tomography of Delone sets with long-range order, we refer the reader
to~\cite{H}. This reference also contains results on the interactive
concept of \emph{successive determination} of finite sets by $X$-rays
and further extensions of settings and results that are beyond our
scope here; compare also~\cite{GL}, ~\cite{H0} and~\cite{H2}.

\section*{Acknowledgements}
The author is indebted to Michael Baake, Richard J. Gardner  and Peter
A. B. Pleasants for their cooperation and for useful hints on the
manuscript. Valuable discussions with Uwe 
Grimm, Peter Gritzmann and Barbara Langfeld are gratefully acknowledged.


%


\begin{thebibliography}{99}

\bibitem{am} R. Ammann, B. Gr\"unbaum \and G. C. Shephard, Aperiodic tiles, \emph{Discrete Comput. Geom.} \textbf{8} (1992), 1--25. 


\bibitem{BG2}  M. Baake, P. Gritzmann, C. Huck, B. Langfeld \and K. Lord, Discrete tomography of planar model sets,
  \emph{Acta Crystallogr. A} \textbf{62} (2006), 419--433; \texttt{arXiv:math/0609393v1 [math.MG]} 

\bibitem{bh} M. Baake \and C. Huck, Discrete tomography of Penrose model sets, \emph{Philos. Mag.} \textbf{87} (2007), 2839--2846; \texttt{arXiv:math-ph/0610056v1}

\bibitem{bj} M. Baake \and D. Joseph, Ideal and defective vertex configurations in the planar octagonal quasilattice, \emph{Phys. Rev. B} \textbf{42} (1990), 8091--8102.

\bibitem{bk1} M. Baake, P. Kramer, M. Schlottmann \and D. Zeidler, The triangle pattern -- a new quasiperiodic tiling with
    fivefold symmetry, \emph{Mod. Phys. Lett. B} \textbf{4} (1990), 249--258.

\bibitem{bk2} M. Baake, P. Kramer, M. Schlottmann \and D. Zeidler, Planar patterns with fivefold symmetry as sections of periodic
    structures in 4-space, \emph{Int. J. Mod. Phys. B} \textbf{4} (1990), 2217--2268.

\bibitem{BM} M. Baake \and R. V. Moody (eds.), \emph{Directions in Mathematical Quasicrystals}, CRM Monograph Series, vol. 13, AMS, Providence, RI, 2000.

\bibitem{Bo} Z. I. Borevich \and I. R. Shafarevich, \emph{Number Theory}, Academic Press, New York, 1966.

\bibitem{D} M. G. Darboux, Sur un probl\`eme de g\'eom\'etrie \'el\'ementaire, \emph{Bull. Sci. Math.} {\bf 2} (1878), 298--304.

\bibitem{G} R. J. Gardner, \emph{Geometric Tomography}, 2nd ed., Cambridge University Press, New York, 2006.

\bibitem{GG} R. J. Gardner \and P. Gritzmann, Discrete tomography: determination of finite sets by X-rays, \emph{Trans. Amer. Math. Soc.} {\bf 349} (1997), 2271--2295.

\bibitem{GG2} R. J. Gardner \and P. Gritzmann, Uniqueness and
    complexity in discrete tomography, in: ~\cite{HK}, pp. 85--114.

\bibitem{GGP} R. J.  Gardner, P. Gritzmann \and D. Prangenberg, On the computational complexity of
  reconstructing lattice sets from their X-rays,
  \emph{Discrete Math.}, \textbf{202} (1999), 45--71.

\bibitem{GM} R. J. Gardner \and P. McMullen, On Hammer's X-ray problem, \emph{J. London Math. Soc.} (2) \textbf{21} (1980), 171--175.


\bibitem{Gou} F. Q. Gouv\^ea, \emph{p-adic Numbers}, Springer, New
  York, 1993.

\bibitem{Gr} P. Gritzmann, On the reconstruction of finite lattice sets
from their X-rays, in: \emph{Lecture Notes on Computer Science}, eds. E. Ahronovitz \and
 C. Fiorio, Springer, London, 1997, pp. 19--32.

\bibitem{GL} P. Gritzmann \and B. Langfeld, On the index of Siegel
  grids and its application to the tomography of quasicrystals,
  \emph{European J. Combin.}, \textbf{29} (2008), 1894--1909.

\bibitem{ga} F. G\"ahler, Matching rules for quasicrystals: the composition-decomposition method, \emph{J. Non-Cryst. Solids} \textbf{153-154} (1993), 160--164. 

\bibitem{HK} G. T. Herman \and A. Kuba (eds.),
\emph{Discrete Tomography: Foundations, Algorithms, and Applications},
Birkh\"auser, Boston, 1999.

\bibitem{H0} C. Huck, Uniqueness in discrete tomography of planar model sets, notes (2007); \texttt{arXiv:math/0701141v2 [math.MG]} 

\bibitem{H} C. Huck, \emph{Discrete Tomography of Delone Sets with Long-Range Order}, Ph.D. thesis (Universit\"at Bielefeld), Logos Verlag, Berlin, 2007. 

\bibitem{H2} C. Huck, Discrete tomography of icosahedral model sets, submitted; \texttt{arXiv:0705.3005v2 [math.MG]}  

\bibitem{H3} C. Huck, A note on affinely regular polygons, \emph{European J. Combin.}, in press; \texttt{arXiv:0801.3218v1 [math.MG]}  

\bibitem{H4} C. Huck, On the existence of $U$-polygons of class $c\geq
  4$ in planar point sets, in preparation.
 
\bibitem{ko} N. Koblitz, \emph{p-adic Numbers, p-adic Analysis, and
    Zeta-Functions}, 2nd ed., Springer, New York, 1984. 

\bibitem{Lagarias} J. C. Lagarias, Geometric models for quasicrystals I. Delone sets of finite type, \emph{Discrete Comput. Geom.} \textbf{21} (1999), no. 2, 161--191. 

\bibitem{La} S. Lang, \emph{Algebra}, 3rd ed., Addison-Wesley, Reading, MA, 1993.

\bibitem{ks} C. Kisielowski, P. Schwander, F. H. Baumann, M. Seibt, Y. Kim \and A. Ourmazd, An approach to quantitative high-resolution transmission electron microscopy of crystalline materials, \emph{Ultramicroscopy} \textbf{58} (1995), 131--155.


\bibitem{Moody} R. V. Moody, Model sets: a survey, in: \emph{From Quasicrystals  to More Complex Systems}, eds. F. Axel, F. D\'{e}noyer \and J.-P. Gazeau, EDP Sciences, Les Ulis, and Springer, Berlin, 2000, pp. 145--166; \texttt{arXiv:math/0002020v1 [math.MG]} 

\bibitem{PABP} P. A. B. Pleasants, Designer quasicrystals: cut-and-project sets with pre-assigned properties, in: ~\cite{BM}, pp. 95--141.

\bibitem{Sa} R. Salem, \emph{Algebraic Numbers and Fourier Analysis}, D. C. Heath and Company, Boston, 1963.

\bibitem{Schl2} M. Schlottmann, Cut-and-project sets in locally compact Abelian groups, in: \emph{Quasicrystals and Discrete Geometry}, ed. J. Patera, Fields Institute Monographs, vol. 10, AMS, Providence, RI, 1998, pp. 247--264.

\bibitem{Schl} M. Schlottmann, Generalized model sets and dynamical systems, in: ~\cite{BM}, pp. 143--159. 

\bibitem{sk} P. Schwander, C. Kisielowski, M. Seibt, F. H. Baumann, Y. Kim \and A. Ourmazd, Mapping projected potential, interfacial roughness, and composition in general crystalline solids by quantitative transmission electron microscopy, \emph{Phys. Rev. Lett.} \textbf{71} (1993), 4150--4153.

\bibitem{St} W. Steurer, Twenty years of structure research on quasicrystals. Part I. Pentagonal, octagonal, decagonal and dodecagonal quasicrystals, \emph{Z. Kristallogr.} {\bf 219} (2004), 391--446. 

\bibitem{Wa} L. C. Washington, {\em Introduction to Cyclotomic Fields}, 2nd ed., Springer, New York, 1997.

\end{thebibliography}
\end{document}